\newtheorem{thm}{Theorem}[section]
\newtheorem{remark}{Remark}[section]
\newtheorem{lemma}{Lemma}[section]
\newtheorem{definition}{Definition}[section]
\newtheorem{proposition}{Proposition}[section]
\def \R{\mathbb R} 
\def\RR{{\mathbb R}}
\def\OL{\overline}
\def \UL {\underline}
\def\({\left(}
\def\){\right)}
\def\[{\left[}
\def\]{\right]}
\def \NN{{\mathbb N}}
\newcommand {\WS}{\stackrel {*}{\longrightarrow}}
\begin{document}
\title{Asymptotics of discrete Riesz $d$-polarization on subsets of $d$-dimensional manifolds} %


\author{
S. V. Borodachov\\
Department of Mathematics,
Towson University, \\ Towson, MD, 21252, USA\\
e-mail: {sborodachov@towson.edu}
\and
N. Bosuwan*\\
Department of Mathematics,
Vanderbilt University, \\Nashville, TN, 37240, USA\\
e-mail: {nattapong.bosuwan@vanderbilt.edu}
}

\maketitle

\section*{Abstract}
\noindent We prove a conjecture of T. Erd\'{e}lyi and E.B. Saff, concerning the form of the dominant term (as $N\to \infty$) of the $N$-point Riesz $d$-polarization constant for an infinite compact subset $A$ of a $d$-dimensional $C^{1}$-manifold  embedded in $\mathbb{R}^{m}$ ($d\leq m$).
Moreover, if we assume further that the $d$-dimensional Hausdorff measure of $A$ is positive, we show that any asymptotically optimal sequence of $N$-point configurations for the $N$-point $d$-polarization problem on $A$  is asymptotically uniformly distributed with respect to $\mathcal H_d|_A$.

These results also hold for finite unions of such sets $A$ provided that their pairwise intersections have $\mathcal H_d$-measure zero.

\renewcommand{\thefootnote}{\fnsymbol{footnote}} 
\footnotetext{\emph{Key words and phrases.} Riesz polarization, Chebyshev constants, Riesz energy, potentials} 
\footnotetext{\emph{2000 Mathematics Subject Classifications.} Primary: 31C20, 31C45, Secondary: 28A78 }      
\footnotetext{*This work was included in a part of this author's Ph.D. dissertation under the direction of E.B. Saff at Vanderbilt University with the other author's consent.} 
\renewcommand{\thefootnote}{\arabic{footnote}} 
\section{Introduction}\label {h}

 \quad Let $\omega_N=\{x_1,\ldots, x_N\}$ denote a configuration of $N$ (not necessarily distinct) points in the $m$-dimensional Euclidean space $\mathbb{R}^m$ (such configurations are known as multisets, however, we will still use the word \emph{configurations}). For an infinite compact set $A\subset \mathbb{R}^m$ and $s>0,$ we define the following quantities:
\begin{equation*}
M^{s}(\omega_N;A):=\min_{y\in A} \sum_{i=1}^N\frac{1}{|y-x_i|^{s}}
\end{equation*}
and
\begin{equation}\label {e1}
M^s_N(A):=\max_{\substack{\omega_N \subset A \\ \#\omega_N=N}} M^{s}(\omega_N;A),
\end{equation}
where $\#\omega_N$ stands for the cardinality of the multiset $\omega_N$.
Following \cite{Saff},  we will call $M^{s}_N(A)$ the \emph{$N$-point Riesz $s$-polarization constant of $A.$} The quantity $M^{s}_N(A)$ is also known as
 the \emph{$N^{\textup{th}}$ $L_s$ Chebyshev constant} of the set $A$ (cf. e.g. \cite{AmbrusBallErdelyi}).
We will call an $N$-point configuration $\omega_N\subset A$ {\it optimal for $M^s_N(A)$} if it attains the maximum on the right-hand side of (\ref {e1}).

It is not difficult to verify that for a fixed vector $\textbf{x}_N:=(x_1,\ldots,x_N)$ in $A^{N}$ (the $N$-th Cartesian power of $A$), the potential function
$f(y):=\sum_{i=1}^N |y-x_i|^{-s}, s>0,$
is lower semi-continuous in $y$  on the set $A$ and the function
$g(\textbf{x}_N):=M^{s}(\textbf{x}_N;A), s>0,$
is upper semi-continuous in $\textbf{x}_N$ on $A^N.$ So, the function $f(y)$ attains its minimum on $A$ and the function $g(\textbf{x}_N)$ attains its maximum on $A^N$; i.e. an optimal configuration in (\ref {e1}) exists when $A$ is an infinite compact set.

The $N$-point Riesz $s$-polarization constant was earlier considered by M.~Ohtsuka in \cite{Ohtsuka}. In particular, he showed that for any infinite compact set $A\subset \mathbb{R}^{m}$, the following limit, called the \emph{Chebyshev constant} of $A$, exists as an extended real number:
\begin {equation}\label {e2}
\mathcal{M}^{s}(A):=\lim_{N \rightarrow \infty} \frac{M^{s}_N(A)}{N}.
\end {equation}
Moreover, he showed that $\mathcal{M}^{s}(A)\geq W^s(A)$, where $W^s(A)$ is the Wiener constant of $A$ corresponding to the same value of $s$. Later,  Chebyshev constants were studied in  \cite{FarkasNagy} and \cite{Revesz1} and used to study the so-called \emph{rendezvous} or \emph{average numbers} in  \cite{Revesz2} and \cite{Revesz1}. In particular, it follows from \cite [Theorem 11]{FarkasNagy} that $\mathcal M^s(A)=W^s(A)$ whenever the maximum principle is satisfied on $A$ for the Riesz $s$-potential. More information on the properties of the Wiener constant can be found, for example, in the book \cite {Lan1972}.

The optimality of $N$ distinct equally spaced points on the circle for the Riesz $s$-polarization problem was proved by G. Ambrus in \cite{Ambrus} and by G. Ambrus, K.~Ball, and T. Erd{\'e}lyi in \cite{AmbrusBallErdelyi} for $s=2$. T. Erd{\'e}lyi and E.B. Saff \cite{Saff} established this for $s=4$. For arbitrary $s>0$, this result was proved by D.P. Hardin, A.P. Kendall, and E.B. Saff \cite{HardinKendallSaff} (paper \cite{Nikolov} earlier established this result for $N=3$). Some problems closely related to polarization were considered in \cite{Nikolov1}.

Let $\mathcal{H}_d$ be the $d$-dimensional Hausdorff measure in $\mathbb{R}^{m}$ normalized so that the copy of the $d$-dimensional unit cube embedded in $\R^m$ has measure~$1$.
The inequality $\mathcal{M}^{s}(A)\geq W^s(A)$ implies that on any infinite compact set $A$ of zero $s$-capacity (i.e., when $W^s(A)=\infty$), the limit $\mathcal{M}^s(A)$ is infinite. 
This means that the $N$-point Riesz $s$-polarization constant $M_N^{s}(A)$ grows at a rate faster than $N.$ In particular, it was proved by
T. Erd\'{e}lyi and E.B. Saff \cite[Theorem 2.4]{Saff} that for a compact set $A$ in $\mathbb{R}^m$ of positive $d$-dimensional Hausdorff measure, one has $M^{d}_N(A)=O(N \ln N)$, $N\to\infty$, and  $M^s_N(A)=O(N^{s/d})$, $N\to\infty$, for every $s>d$. The order estimate for $s=d$ is sharp when $A$ is contained in a $d$-dimensional $C^1$-manifold and the order estimate for $s>d$ is sharp when $A$ is $d$-rectifiable (see \cite[Theorem 2.3]{Saff}). We remark that the case $d=1$ of these order estimates when $A$ is a circle was obtained in \cite {AmbrusBallErdelyi}.

Furthermore, when $A$ is the unit ball $\mathbb{B}^{d}$ in $\mathbb{R}^{d}$ or the unit sphere $\mathbb{S}^{d}$ in $\mathbb{R}^{d+1}$, paper \cite {Saff} proves that
 \begin{equation}\label{eq2}
 \lim_{N \rightarrow \infty} \frac{M^{d}_N(\mathbb{B}^{d})}{N \ln N}=1, \quad d\geq 1,
 \end{equation} 
 and 
\begin{equation}\label{eq1}
 \lim_{N \rightarrow \infty} \frac{M^{d}_N(\mathbb{S}^d)}{N \ln N}=\frac {\beta_d}{\mathcal H_d(\mathbb{S}^d)}, \quad d\geq 2,
 \end{equation}
where $\beta_d$ denotes the volume of the $d$-dimensional unit ball $\mathbb {B}^d$. 

When $A$ is an infinite compact subset of a $d$-dimensional $C^1$-manifold, T. Erd\'{e}lyi and E.B. Saff \cite {Saff} also show that
\begin {equation}\label {lower}
\liminf\limits_{N\to\infty}{\frac {M^d_N(A)}{N\ln N}}\geq \frac {\beta_d}{\mathcal H_d(A)}
\end {equation}
and conjecture that the limit of the sequence on the left-hand side of \eqref {lower} exists and equals the right-hand side.

Another interesting fact established in \cite{Saff} is that $M^s_N(\mathbb B^d)=N$ for every $N\geq 1$ and $0<s\leq d-2$ (the maximum principle does not hold for the Riesz $s$-potential in the case $0<s<d-2$). 

A more detailed review of results on polarization can be found, for example, in the papers \cite {AmbrusBallErdelyi}, \cite {Saff}, \cite {FarkasNagy}, and \cite {Revesz1}.

The polarization problem is related to the discrete minimal Riesz energy problem described below.
For a set $X_N=\{x_1,\ldots,x_N\}$ of $N\geq 2$ pairwise distinct points in $\mathbb{R}^{m},$ we define its Riesz $s$-energy by 
$$
E_s(X_N):=\sum_{1 \leq j \not=k \leq N}\frac{1}{|x_j-x_k|^s},
$$
and the \emph {minimum $N$-point Riesz $s$-energy} of a compact  set $A\subset \mathbb{R}^{m}$ is defined as
$$
\mathcal{E}_s(A,N):=\min_{\substack{X_N \subset A \\ \# X_N=N}}E_s(X_N).
$$


D.P. Hardin and E.B. Saff proved in \cite{HardinSaff2005} (see also \cite {HarSaf2004}) that if $A$ is an infinite compact subset of a $d$-dimensional $C^{1}$-manifold embedded in $\mathbb{R}^{m}$ (see Definition \ref {D1}), then\footnote{
The results and techniques of \cite {HardinSaff2005}, in fact, yield relations \eqref {eq3} and \eqref {eqq} under a more general assumption that $A$ is a compact set in $\RR^m$ which for every $\epsilon>0$ can be partitioned into finitely many subsets bi-Lipschitz homeomorphic to some sets from $\RR^d$ with constant $1+\epsilon$ and having boundaries relative to $A$ of $\mathcal H_d$-measure zero (see \cite {BHS}). }   
\begin{equation}\label{eq3}
\lim_{N \rightarrow \infty} \frac{\mathcal{E}_d(A,N)}{N^{2} \ln N}=\frac{\beta_d}{\mathcal{H}_d(A)}.
\end{equation}
Furthermore, if $A$ is as in above condition and $\mathcal H_d(A)>0$, then for any sequence $X_N=\{x_{k,N}\}_{k=1}^N$, $N\in \NN$, of asymptotically $d$-energy minimizing $N$-point configurations in $A$ in the sense that 
 $$
 \lim_{ N \rightarrow \infty} \frac{E_d(X_N)}{\mathcal E_d(A,N)}=1,
 $$ 
 we have
 \begin {equation}\label {eqq}
 \frac{1}{N}\sum_{i=1}^N \delta_{x_{i,N}} \WS \frac{\mathcal{H}_d(\cdot)|_{A}}{\mathcal{H}_d(A)}, \ \ \ N \rightarrow \infty,
 \end {equation}
 in the weak$^\ast$ topology of measures (see Section \ref {n} for the definition). Here $\delta_x$ denotes the unit point mass at the point $x.$




The dominant term of the minimum $s$-energy on $d$-rectifiable closed sets in $\RR^m$ ($s>d$) as well as relation \eqref {eqq} for asymptotically optimal sequences of $N$-point configurations were obtained in \cite {HardinSaff2005} and \cite {BorHarSaf2008}. (In the case $d=1$ these results were earlier established for curves in \cite {MMRS}).

Relations (\ref{eq3}) and (\ref{eqq}) have recently been extended by D.P. Hardin, E.B.~Saff, and J.T. Whitehouse to the case of $A$ being a finite union of compact subsets of $\mathbb{R}^m$ where each compact set is contained in some $d$-dimensional $C^1$-manifold in $\mathbb{R}^m$($d \leq m$) and the pairwise intersections of such compact sets have $\mathcal{H}_d$-measure zero. These authors observed that the methods of \cite {MMRS} could be applied (see \cite{BHS}). 


A detailed review of known results on discrete minimum energy problems can be found, for example, in the book \cite {BHS}.

\section {Notation and definitions}\label {n}

In this section we will mention the main definitions used in the paper.
For a subset $K\subset A$, we will denote by $\partial_ A K$ the boundary of $K$ relative to $A$. 

We say that a sequence $\{\mu_n\}_{n=1}^{\infty}$ of Borel probability measures in $\RR^m$ converges to a Borel probability measure $\mu$ in the weak$^\ast$ topology of measures (and write $\mu_n \WS \mu$, $n\to\infty$) if for every continuous function $f:\RR^m\to \RR$,
\begin {equation}\label {eqeq}
\int{f\ \! d\mu_n}\to \int f\ \! d\mu,\ \ \ n\to\infty.
\end {equation}

\begin {remark}\label {R2.1}
{\rm 
It is well known that to prove \eqref {eqeq} when $\mu$ and all the measures $\mu_n$ are supported on a compact set $A\subset \RR^m$,  it is sufficient to show that
$$
\mu_n(K)\to \mu (K),\ \ \ n\to \infty,
$$
for every closed subset $K$ of $A$ with $\mu(\partial _A K)=0$.
}
\end {remark}

It will be convenient to use throughout this paper the following definition of a $d$-dimensional $C^{1}$-manifold in $\mathbb{R}^m$ (see, for example, \cite [Chapter 5]{Spivak}).


 \begin{definition}\label{D1}
{\rm
A set $W \subset \mathbb{R}^m$ is called a \emph{$d$-dimensional $C^1$-manifold embedded in $\mathbb{R}^m$}, $d\leq m$, if every point $y\in W$ has an open neighborhood $V$ relative to $W$ such that $V$ is homeomorphic to an open set $U\subset \mathbb{R}^d$ with the homeomorphism $f:U\to V$ being a $C^1$-continuous mapping and the Jacobian matrix 
$$
J^{f}_{x}:=\begin{bmatrix} ~\nabla f_1(x) ~ \\ \ldots \\ \nabla f_m(x)\end{bmatrix}
$$
of the function $f$ having rank $d$ at any point $x\in U$ (here $f_1,\ldots,f_m$ denote the coordinate mappings of $f$).
}
\end{definition} 

Finally, we call a sequence $\{\omega_N\}_{N=1}^{\infty}$ of $N$-point configurations on $A$ {\it asymptotically optimal for the $N$-point $d$-polarization problem on $A$} if
$$
\lim\limits_{N\to\infty}{\frac {M^d(\omega_N;A)}{M^d_N(A)}}=1.
$$


 \section{Main results}

In this paper we extend relation (\ref{eq2}) to the case of an arbitrary infinite compact set in $\R^d$ and relation (\ref{eq1}) to the case when $A$ is an infinite compact subset of a $d$-dimensional $C^{1}$-manifold embedded in $\mathbb{R}^m$ where $m>d$ or a finite union of such sets provided that their pairwise intersections have $\mathcal H_d$-measure zero. Under additional assumption that $\mathcal{H}_{d}(A)>0,$ we also determine the weak$^\ast$ limiting distribution of asymptotically optimal $N$-point configurations for the $N$-point $d$-polarization problem on these classes of sets. Relation \eqref {add_17} below proves the conjecture made by T. Erd\'{e}lyi and E.B.~Saff in 
 \cite[Conjecture 2]{Saff}.


 \begin{thm}\label{bavv}
 Let $A=\cup_{i=1}^l A_i$ be an infinite subset of $\mathbb{R}^m$, where each set $A_i$ is a compact subset contained in some $d$-dimensional $C^1$-manifold in $\mathbb{R}^m,$ $d\leq m,$ and $\mathcal H_d(A_i\cap A_j)=0$, $1\leq i<j\leq l$. 
 Then
 \begin {equation}\label {add_17}
 \lim_{N \rightarrow \infty} \frac{M^{d}_N(A)}{N \ln N} =\frac{\beta_d}{\mathcal{H}_d{(A)}}.
 \end {equation}
 Furthermore, under an additional assumption that $\mathcal{H}_{d}(A)>0,$  if  $\omega_N=\{x_{i,N}\}_{i=1}^{N}$, $N\in \NN$, is a sequence of asymptotically optimal configurations for the $N$-point $d$-polarization problem on $A$, then in the weak$^\ast$ topology of measures we have \begin{equation}\label{eq7}
\frac{1}{N} \sum_{i=1}^N\delta_{x_{i,N}} \WS \frac{\mathcal{H}_{d}(\cdot)|_{A}}{\mathcal{H}_{d}(A)}, \ \ \ N \rightarrow \infty.
 \end{equation}
\end {thm}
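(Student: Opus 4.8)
The plan is to establish \eqref{add_17} first and then to bootstrap from it to \eqref{eq7}. Since \eqref{lower} is available for each $A_i$, the left–hand side of \eqref{add_17} is $+\infty$ whenever $\mathcal{H}_d(A)=0$, so one may assume $\mathcal{H}_d(A)>0$. The lower bound $\liminf_N M^d_N(A)/(N\ln N)\ge\beta_d/\mathcal{H}_d(A)$ is obtained by the standard construction: partition $A$ up to an $\mathcal{H}_d$–null set into small pieces $B_1,\dots,B_p$, each contained in a single $A_i$ and with $\mathcal{H}_d(\partial_A B_k)=0$, and place $N_k\sim N\,\mathcal{H}_d(B_k)/\mathcal{H}_d(A)$ points of an optimal configuration for $M^d_{N_k}(B_k)$ into each $B_k$; for $y\in B_{k_0}$ the total potential is at least the potential of the $N_{k_0}$ points in $B_{k_0}$, which by \eqref{lower} applied to $B_{k_0}$ equals $(1-o(1))\beta_d N_{k_0}\ln N_{k_0}/\mathcal{H}_d(B_{k_0})=(1-o(1))\beta_d N\ln N/\mathcal{H}_d(A)$. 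Thus the content is the matching upper bound, after which \eqref{eq7} follows by a localized refinement of the same argument.

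For the upper bound I would average the potential over the part of $A$ that is well separated from the configuration. The quantitative input is a uniform density estimate coming from the $C^1$ structure: for every $\epsilon>0$ there is $t_0>0$ with $\mathcal{H}_d(A_i\cap B(x,t))\le(1+\epsilon)\beta_d t^d$ for all $x\in A_i$ and $0<t\le t_0$ (cover $A_i$ by finitely many coordinate patches on which the induced metric is $(1+\epsilon)$–bi-Lipschitz to the Euclidean one). Given $\omega_N\subset A$, set $\rho_N:=N^{-1/d}/\ln N$ and let $G_N$ be the set of $y\in A$ with $|y-x_{i,N}|\ge\rho_N$ for all $i$; since $A$ carries at most $N$ distinct configuration points, $\mathcal{H}_d(A\setminus G_N)\le C\,N\rho_N^d=o(1)$, hence $\mathcal{H}_d(G_N)\ge(1-o(1))\mathcal{H}_d(A)$ and $M^d(\omega_N;A)=\min_{y\in A}\sum_i|y-x_{i,N}|^{-d}\le\mathcal{H}_d(G_N)^{-1}\sum_i\int_{G_N}|y-x_{i,N}|^{-d}\,d\mathcal{H}_d(y)$. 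To keep the constant sharp for $l\ge2$ I would shrink $G_N$ further by deleting a fixed neighborhood of the $\mathcal{H}_d$–null junction set $Z:=\bigcup_{i<j}(A_i\cap A_j)$ (this costs $\mathcal{H}_d$–measure $\tau(\delta_0)\to0$) and split the configuration points: a point near $Z$ is at distance $\ge\delta_0/2$ from every $y\in G_N$, so it contributes at most a constant $C_{\delta_0}$ to its inner integral; a point away from $Z$ lies in a unique sheet $A_{i_0}$ and, by compactness, at distance $\ge\eta_0(\delta_0)>0$ from every other sheet, so Stieltjes integration of $t\mapsto\mathcal{H}_d(A_{i_0}\cap B(x,t))$ against the density estimate bounds its contribution by $(1+\epsilon)\beta_d\,d\ln(1/\rho_N)+C_{\delta_0}=(1+o(1))(1+\epsilon)\beta_d\ln N$. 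Summing over the at most $N$ points gives $\sum_i\int_{G_N}|y-x_{i,N}|^{-d}\,d\mathcal{H}_d\le(1+\epsilon)\beta_d N\ln N(1+o(1))$, hence $\limsup_N M^d_N(A)/(N\ln N)\le(1+\epsilon)\beta_d/((1-\tau(\delta_0))\mathcal{H}_d(A))$; letting $\epsilon\to0$ and $\delta_0\to0$ proves \eqref{add_17}.

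For \eqref{eq7} I would pass to a weak$^\ast$ subsequential limit $\nu$ of $\nu_N:=\frac1N\sum_{i=1}^N\delta_{x_{i,N}}$ and show $\nu=\mathcal{H}_d(\cdot)|_A/\mathcal{H}_d(A)$. Fix $\epsilon,\delta_0>0$ and a partition $A=\bigsqcup_k B_k$ into finitely many pieces of small diameter, each contained in a single $A_i$, with $\mathcal{H}_d(\partial_A B_k)=0$ and, by a generic choice of the partition, $\nu(\partial_A B_k)=0$; put $N_k:=\#(\omega_N\cap B_k)$, so $\sum_kN_k=N$ and $N_k/N\to\nu(B_k)$ along the subsequence. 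Running the averaging argument with $G_N$ replaced by $\tilde G_k:=B_k$ minus fixed neighborhoods of $Z$, of $\partial_A B_k$, and of the other sheets, and minus the $\rho_N$–neighborhood of $\omega_N$ — a set on which only the ``bulk'' points of $\omega_N$ lying in $B_k$ are within logarithmically relevant distance, every other configuration point being at distance bounded below by a constant depending only on $\delta_0$ — gives $M^d(\omega_N;A)\le\mathcal{H}_d(\tilde G_k)^{-1}\big((1+\epsilon)\beta_d N_k\ln N(1+o(1))+o(N\ln N)\big)$ with $\mathcal{H}_d(\tilde G_k)\ge\mathcal{H}_d(B_k)-\tau(\delta_0)-o(1)$. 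Since $\omega_N$ is asymptotically optimal and $M^d_N(A)=(1+o(1))\beta_d N\ln N/\mathcal{H}_d(A)$ by \eqref{add_17}, dividing by $N\ln N$, passing to the limit along the subsequence, and letting $\delta_0,\epsilon\to0$ yields $\nu(B_k)\ge\mathcal{H}_d(B_k)/\mathcal{H}_d(A)$ for every $k$. As $\sum_k\nu(B_k)=1=\sum_k\mathcal{H}_d(B_k)/\mathcal{H}_d(A)$, every such inequality is an equality; since the partitions may be taken arbitrarily fine, $\nu$ agrees with $\mathcal{H}_d(\cdot)|_A/\mathcal{H}_d(A)$ on enough sets to conclude, via Remark~\ref{R2.1}, that $\nu=\mathcal{H}_d(\cdot)|_A/\mathcal{H}_d(A)$. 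As this holds for every subsequential limit, \eqref{eq7} follows.

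The main obstacle is the union structure in the upper bound. Averaging the potential over all of $G_N$ naively overcounts near the points where two sheets meet and only yields $\limsup_N M^d_N(A)/(N\ln N)\le l\beta_d/\mathcal{H}_d(A)$; recovering the exact constant $\beta_d/\mathcal{H}_d(A)$ hinges on isolating the $\mathcal{H}_d$–null junction set, using that every configuration point contributes only $O(N)=o(N\ln N)$ to the potential at far-away points while each remaining point ``sees'' a single flat sheet at the scales that matter. Making this precise carries a fair amount of geometric bookkeeping — the uniform density estimate, the constants $\eta_0(\delta_0)$ and $\tau(\delta_0)$, the rate $\rho_N=N^{-1/d}/\ln N$ (chosen so that $d\ln(1/\rho_N)=(1+o(1))\ln N$ while $N\rho_N^d\to0$), the separation of the pieces $\tilde G_k$ from the remainder of $A$, and, for \eqref{eq7}, the choice of the partition generically with respect to the a priori uncontrolled measure $\nu$ — and this bookkeeping is the technical heart of the argument.
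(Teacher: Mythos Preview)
Your proposal is essentially correct, and the upper bound together with the weak$^\ast$ distribution argument follow the same strategy as the paper: average the potential over the part of $A$ that stays at distance at least $\asymp N^{-1/d}$ from the configuration, feed in the uniform density bound $\mathcal H_d(A_i\cap B(x,t))\le(1+\epsilon)\beta_d t^d$ (the paper packages this as the quantity $\overline\alpha_d(\cdot;\epsilon)$ and Proposition~\ref{Pr1}), excise a neighborhood of the junction set $Z=\bigcup_{i<j}(A_i\cap A_j)$ so that each surviving configuration point sees only one sheet at the relevant scales, and then localize the same averaging to pieces $B_k$ to obtain $\liminf\#(\omega_N\cap B_k)/N\ge\mathcal H_d(B_k)/\mathcal H_d(A)$, from which equality and weak$^\ast$ convergence follow by the complement/summation trick. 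Two small points to tighten: for the lower bound your pieces $B_k$ must actually \emph{cover} $A$ (not merely up to an $\mathcal H_d$--null set), since the minimum in $M^d(\omega_N;A)$ is taken over every $y\in A$; taking the $B_k$ closed (e.g.\ $B_k=A_k$, overlaps allowed) fixes this. Second, your claim that a configuration point away from $Z$ is uniformly far from every other sheet is correct, but it is a compactness statement about $\{x\in A_{i_0}:\operatorname{dist}(x,Z)\ge\delta_0/2\}$ being a compact set disjoint from $\bigcup_{j\ne i_0}A_j$; this deserves one explicit line.

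The one genuine difference from the paper is the lower bound. You build a configuration by hand from optimal configurations on the pieces $B_k$ and invoke \eqref{lower} on each piece. The paper instead uses the one-line comparison
\[
M^d_N(A)\ \ge\ \frac{1}{N-1}\,\mathcal E_d(A,N),
\]
together with the known asymptotics $\liminf_N \mathcal E_d(A,N)/(N^2\ln N)\ge\beta_d/\mathcal H_d(A)$ for such unions (Proposition~\ref{add_log_energy}, coming from \cite{HardinSaff2005}). Your route avoids the detour through minimal energy and is self-contained once \eqref{lower} is granted for a single $C^1$ piece; the paper's route is shorter, needs no partitioning, and immediately handles the case $\mathcal H_d(A)=0$ as well.
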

\begin{remark}\label{ujkiol}
\textup{Note that the conditions imposed on the set $A$ imply $\mathcal{H}_d(A)<\infty$. Moreover, if $\mathcal{H}_d(A)=0,$ then the limit in (\ref{add_17}) is understood to be $\infty$.}
\end{remark}

To establish Theorem \ref {bavv} we will use the result proved in Section \ref {4}, Lemma \ref {add_7}, and Proposition \ref {add_log_energy}.

\section {Upper estimate}\label {4}

For a compact set $A\subset \RR^m$, define the quantity
\begin{equation}\label{eq5}
\overline{\alpha}_d(A;\varepsilon):=\sup_{0<r\leq \varepsilon} \sup_{x\in A} \frac{\mathcal{H}_d(B(x,r) \cap A)}{\beta_d r^d}.
\end{equation} 
Let also
$$
\UL h_d(A):=\liminf\limits_{N\to\infty}{\frac {M^d_N(A)}{N\ln N}} \ \ \ {\rm and}\ \ \ \OL h_d(A):=\limsup\limits_{N\to\infty}{\frac {M^d_N(A)}{N\ln N}}.
$$
The main result of this section is given below.
\begin {thm}\label {upper}
Let $d,m\in \NN$, $d\leq m$, and $A\subset \RR^m$ be a compact set with $0<\mathcal H_d(A)<\infty$, containing a closed subset $B$ of zero $\mathcal H_d$-measure such that every compact subset $K\subset A\setminus B$ satisfies 
\begin {equation}\label {3q}
\lim\limits_{\epsilon\to 0^+}\OL \alpha_d(K;\epsilon)\leq 1.
\end {equation}
Then
\begin {equation}\label {add_eq3}
\OL h_d(A)\leq \frac {\beta_d}{\mathcal H_d(A)}.
\end {equation}
If an equality holds in \eqref {add_eq3}, then any infinite  sequence   $\omega_N=\{x_{k,N}\}_{k=1}^N$, $N\in \mathcal N\subset \NN$, of configurations on $A$ such that 
\begin{equation}\label{eq6}
\lim_{N \rightarrow \infty\atop N\in \mathcal N} \frac{M^{d}(\omega_N;A)}{N \ln N} =\frac{\beta_d}{\mathcal{H}_d{(A)}}
\end{equation}
satisfies
\begin {equation}\label {11a}
\frac{1}{N}\sum_{i=1}^N \delta_{x_{i,N}} \WS \frac{\mathcal{H}_d(\cdot)|_{A}}{\mathcal{H}_d(A)}, \ \ \ \mathcal N\ni N \rightarrow \infty.
\end {equation}
\end {thm}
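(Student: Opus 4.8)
The plan is to prove the upper bound \eqref{add_eq3} by the mean-value device
$M^d(\omega_N;A)=\min_{y\in A}\sum_i|y-x_i|^{-d}\le\frac{1}{\mu(G)}\int_G\sum_i|y-x_i|^{-d}\,d\mu(y)$,
where $\mu:=\mathcal H_d|_A/\mathcal H_d(A)$ and $G\subseteq A$ is a suitable region of positive measure; since $\min\le$ average, any single choice of $G$ gives an upper estimate valid for an \emph{arbitrary} configuration $\omega_N$, which is exactly what is needed to bound $M^d_N(A)$. The naive choice $G=A$ fails because $\int_A|y-x_i|^{-d}\,d\mathcal H_d(y)$ diverges logarithmically at $y=x_i$; this divergence is, however, the very source of the $N\ln N$ growth, so I would take $G$ to be $A$ with the small balls $B(x_i,\delta)$ excised, with $\delta=(c\,\mathcal H_d(A)/N)^{1/d}$ so that $\ln(1/\delta)=\tfrac1d\ln N+O(1)$, and keep $G$ away from the exceptional set $B$.

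The core estimate is that, for a point $x_i$ lying in a compact $K\subseteq A\setminus B$ on which the density hypothesis \eqref{3q} is quantitative, integration by parts in the layer-cake identity $\int_{|y-x_i|\ge\delta}|y-x_i|^{-d}\,d\mathcal H_d(y)=\int_\delta^D r^{-d}\,dg(r)$ with $g(r):=\mathcal H_d(B(x_i,r)\cap A)$, together with $g(r)\le\overline{\alpha}_d(K;\epsilon_0)\beta_d r^d$ for small $r$, yields the bound $C+d\,\overline{\alpha}_d(K;\epsilon_0)\beta_d\ln(1/\delta)=\overline{\alpha}_d(K;\epsilon_0)\beta_d\ln N+O(1)$, the factor $d$ cancelling against $\ln(1/\delta)=\tfrac1d\ln N+O(1)$. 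To make the hypothesis applicable I would separate the points of $\omega_N$ by distance to $B$: fixing a small $\rho>0$, call $x_i$ \emph{good} if $\mathrm{dist}(x_i,B)\ge\rho/2$ and \emph{bad} otherwise, and integrate over $G=\{y\in A:\mathrm{dist}(y,B)\ge\rho\}\setminus\bigcup_iB(x_i,\delta)$. The triangle inequality forces $|y-x_i|\ge\rho/2$ for every bad $x_i$ and every $y\in G$, so the bad points contribute at most $(\rho/2)^{-d}$ each, i.e.\ $O(N)=o(N\ln N)$ in total, no matter how many cluster near $B$; each good point lies in the compact $K=\{\mathrm{dist}(\cdot,B)\ge\rho/4\}\subseteq A\setminus B$, its excised ball is contained in $K$, and the core estimate applies with $\overline{\alpha}_d(K;\epsilon_0)\le\tau$. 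Summing, and using $\mu(G)\ge1-\eta'-\tau\beta_dc\to1$ (as $\rho,c\to0$, since $\mathcal H_d(B)=0$), gives $M^d(\omega_N;A)\le\frac{\tau\beta_d}{(1-\eta'-\tau\beta_dc)\mathcal H_d(A)}\,N\ln N\,(1+o(1))$ uniformly in $\omega_N$; taking $\limsup_N$ and then $\tau\to1$, $\rho,c\to0$ yields \eqref{add_eq3}.

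For the distributional statement I would show that, under equality in \eqref{add_eq3}, every weak$^\ast$ limit point $\sigma$ of the probability measures $\nu_N:=\frac1N\sum_i\delta_{x_{i,N}}$ equals $\nu:=\mathcal H_d|_A/\mathcal H_d(A)$; by weak$^\ast$ compactness and Remark \ref{R2.1} this gives \eqref{11a}. The engine is a \emph{localized} version of the above bound: restricting the outer minimum to a nice closed patch $P\subseteq A$ with $\mathcal H_d(\partial_AP)=0$ and integrating $\sum_i|y-x_i|^{-d}$ over $P$ (away from $\partial_AP$, from $B$, and from the points), only the good points lying in $P$ produce a logarithmic term, of total size at most $\frac{n_P}{\mathcal H_d(P)}\tau\beta_d\ln N(1+o(1))$ with $n_P=\#(\omega_N\cap P)$, while all other points contribute $O(N)$. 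Hence $M^d(\omega_N;A)\le\frac{1}{\mu_P(G_P)}\big[\frac{n_P}{\mathcal H_d(P)}\tau\beta_d\ln N(1+o(1))+O(N)\big]$; dividing by $N\ln N$, passing to a subsequence along which $\nu_N\to\sigma$ (so $n_P/N\to\sigma(P)$ when $\sigma(\partial_AP)=0$), using the hypothesis $M^d(\omega_N;A)/(N\ln N)\to\beta_d/\mathcal H_d(A)$, and letting $\tau\to1$, $\rho,c\to0$, forces $\sigma(P)\ge\mathcal H_d(P)/\mathcal H_d(A)=\nu(P)$. Applying this ``no underpopulation'' inequality simultaneously to $P$ and to $\overline{A\setminus P}$ (whose common boundary is $\sigma$- and $\nu$-null) and using $\sigma(A)=\nu(A)=1$ upgrades it to the equality $\sigma(P)=\nu(P)$ on a class of patches rich enough to determine the measure, whence $\sigma=\nu$.

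The main obstacle I anticipate is making all error terms uniform over the (possibly adversarial) configuration $\omega_N$: the density hypothesis is only quantitative on compact subsets of $A\setminus B$, so the estimate must be insulated from points accumulating on the $\mathcal H_d$-null set $B$ (and, in the localized step, on $\partial_AP$). This is precisely what the fixed distance $\rho$ and the separation $|y-x_i|\ge\rho/2$ are designed to achieve, converting an uncontrolled cluster of points into a harmless $O(N)$ contribution. Secondary points are the legitimacy of the order of limits ($N\to\infty$ first, then $\tau\to1$ and $\rho,c\to0$, with the $O(1)$/$o(1)$ terms depending on $\tau,\rho,c$ but not on $N$ or $\omega_N$) and, for the localized bound, the choice of patches $P$ with $\mathcal H_d$-null relative boundaries for which exterior points far from $\partial_AP$ are genuinely far from $P$, so that their contribution is $O(1)$ rather than logarithmic.
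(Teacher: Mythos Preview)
Your proposal is correct and follows essentially the same route as the paper. Both arguments bound $M^d(\omega_N;A)$ by averaging $\sum_i|y-x_i|^{-d}$ over a region $G=K_{2\rho}\setminus\bigcup_iB(x_i,rN^{-1/d})$ (the paper's notation), split the points into those inside a compact $K_\rho\subset A\setminus B$ and those outside (your good/bad dichotomy), use the layer-cake identity to get the $\overline{\alpha}_d(K_\rho;r)\beta_d\ln N$ bound for the good points and the trivial $h^{-d}$ bound for the bad ones, and then let $r\to0$ followed by $\rho\to0$; the distributional part is likewise obtained in both by a localized version of the same estimate yielding the ``no underpopulation'' inequality $\liminf\#(\omega_N\cap K)/N\ge\mathcal H_d(K)/\mathcal H_d(A)$, followed by the complement trick $K\mapsto\overline{A\setminus K}$.

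The only organizational difference is that the paper packages the localized estimate first as a single lemma (Lemma~\ref{add_10}) valid for any compact $K$ with $\mathcal H_d(\partial_AK)=0$, and then specializes to $K=A$ for \eqref{add_eq3}, whereas you do the global case first and then redo it on a patch $P$. One small point to watch in your write-up: in the distributional step you invoke $n_P/N\to\sigma(P)$ along a weak$^\ast$ convergent subsequence ``when $\sigma(\partial_AP)=0$'', but a priori you only control $\mathcal H_d(\partial_AP)=0$, not $\sigma(\partial_AP)$. The paper avoids this by proving the $\liminf$ inequality directly (without passing to $\sigma$) and then showing separately that closed $\mathcal H_d$-null sets attract an asymptotically zero fraction of points; you can do the same, or simply note that the no-underpopulation inequality for $P$ and for $\overline{A\setminus P}$ together force the full limit of $n_P/N$ to exist and equal $\nu(P)$, which makes the Portmanteau issue moot.
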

\noindent We precede the proof of Theorem \ref {upper} with the following auxiliary statements.
\begin {lemma}\label {add_up}
Let $0<R\leq r$, $D\subset \RR^m$ be a compact set with $\mathcal H_d(D)<\infty$, $d\in \NN$, $d\leq m$, and $y\in D$. Then
$$
\int\limits_{D\setminus B(y,R)}{\frac {d\mathcal H_d(x)}{\left|x-y\right|^d}}\leq r^{-d} \mathcal{H}_d(D)+\beta_d \overline{\alpha}_d(D;r)  \ln \(\frac {r}{R}\)^d.
$$
\end {lemma}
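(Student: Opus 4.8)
The strategy is to bound the integral by decomposing the region $D\setminus B(y,R)$ into dyadic annuli centered at $y$ and estimating the contribution of each annulus using the definition of $\overline\alpha_d(D;r)$. Concretely, let $n$ be the unique integer with $2^{n-1}R < r \le 2^n R$ (if $R=r$ the outer annuli simply do not appear), and write
\begin{equation*}
D\setminus B(y,R) = \Big(\bigcup_{k=0}^{n-1}\big(B(y,2^{k+1}R)\setminus B(y,2^k R)\big)\cap D\Big) \ \cup\ \big(D\setminus B(y,2^n R)\big).
\end{equation*}
On the $k$-th annulus $B(y,2^{k+1}R)\setminus B(y,2^kR)$ we have $|x-y|^{-d}\le (2^kR)^{-d}$, while the $\mathcal H_d$-measure of that annulus intersected with $D$ is at most $\mathcal H_d(B(y,2^{k+1}R)\cap D)\le \beta_d\,\overline\alpha_d(D;r)\,(2^{k+1}R)^d$ provided $2^{k+1}R\le r$, which holds for $k=0,\dots,n-1$. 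Hence each of the first $n$ annuli contributes at most $\beta_d\,\overline\alpha_d(D;r)\,2^d$, for a total of at most $\beta_d\,\overline\alpha_d(D;r)\,2^d n$. Since $2^{n-1}R<r$, i.e. $n-1<\log_2(r/R)$, we get $2^d n < 2^d(1+\log_2(r/R)) $; a slightly more careful grouping (using annuli with ratio $e$ rather than $2$, or directly estimating $\sum (2^kR)^{-d}\mathcal H_d(\cdots)$ against the logarithm) yields the cleaner bound $\beta_d\,\overline\alpha_d(D;r)\ln\big((r/R)^d\big)$ for the inner part. The outer piece $D\setminus B(y,2^nR)$ is handled by the crude bound $|x-y|^{-d}\le (2^nR)^{-d}\le r^{-d}\cdot(2^{n-1}R/ r)^{-d}\cdot 2^{-d}\le r^{-d}$ wait — more simply, on $D\setminus B(y,r)$ one has $|x-y|^{-d}\le r^{-d}$, so $\int_{D\setminus B(y,r)}|x-y|^{-d}\,d\mathcal H_d \le r^{-d}\mathcal H_d(D)$, accounting for the first term.

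Assembling the two parts gives exactly
\begin{equation*}
\int_{D\setminus B(y,R)}\frac{d\mathcal H_d(x)}{|x-y|^d} \le r^{-d}\mathcal H_d(D) + \beta_d\,\overline\alpha_d(D;r)\ln\!\Big(\frac{r}{R}\Big)^d,
\end{equation*}
as claimed. The one subtlety to get the constant exactly right (no stray factor like $2^d$) is to split at the radius $r$ first — integrate over $D\setminus B(y,r)$ trivially to produce the $r^{-d}\mathcal H_d(D)$ term — and then dyadically decompose only the bounded shell $B(y,r)\setminus B(y,R)$, where every sub-annulus $B(y,\rho_{k+1})\setminus B(y,\rho_k)$ with $R=\rho_0<\rho_1<\cdots<\rho_n=r$ stays inside radius $r$ so that $\overline\alpha_d(D;r)$ legitimately governs its measure.

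The main obstacle is not conceptual but bookkeeping: choosing the dyadic (or geometric) subdivision so that the sum of the annular contributions telescopes or sums to precisely $\beta_d\,\overline\alpha_d(D;r)\ln(r/R)^d$ rather than a constant multiple of it. The clean way is to take a continuum of shells and compare the sum to an integral: for $R\le \rho\le r$, the measure of $B(y,\rho)\cap D$ is $\le \beta_d\,\overline\alpha_d(D;r)\rho^d$, and an integration-by-parts / layer-cake argument on $\int_{B(y,r)\setminus B(y,R)}|x-y|^{-d}\,d\mathcal H_d$ — writing it as $\int_R^r \rho^{-d}\, d\big(\mathcal H_d(B(y,\rho)\cap D)\big)$ and integrating by parts, or bounding $|x-y|^{-d} = d\int_{|x-y|}^\infty t^{-d-1}\,dt$ and swapping order of integration — produces the logarithm with the sharp constant $\beta_d\,\overline\alpha_d(D;r)$, since $\int_R^r d\,\rho^{-1}\,d\rho = d\ln(r/R) = \ln(r/R)^d$. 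Either route (discrete dyadic with a limiting argument, or the direct layer-cake computation) finishes the proof; I would present the layer-cake version as it makes the exact constant transparent.
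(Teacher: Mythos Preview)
Your final recommendation---the layer-cake computation---is exactly what the paper does: it writes the integral as $\int_0^\infty \mathcal{H}_d\{x\in D\setminus B(y,R): |x-y|^{-d}>t\}\,dt$, bounds the level sets by $\mathcal{H}_d(B(y,t^{-1/d})\cap D)$, splits the $t$-range at $r^{-d}$, uses $\mathcal{H}_d(D)$ on $[0,r^{-d}]$ and $\beta_d\,\overline\alpha_d(D;r)\,t^{-1}$ on $[r^{-d},R^{-d}]$, and integrates to obtain the two terms with the sharp constant. Your dyadic detour is correct in spirit but, as you yourself observe, does not deliver the exact constant without a further limiting argument; the paper bypasses it entirely in favor of the direct layer-cake estimate you arrive at in the last paragraph.
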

\begin {proof}
We have
\begin{align}\label{eq11}
\int\limits_{D\setminus B(y,R)}{\frac {d\mathcal H_d(x)}{\left|x-y\right|^d}} &= \int_{0}^{\infty} \mathcal{H}_d\{x \in D\setminus B(y,R) : |x-y|^{-d}>t\} dt\notag\\
&= \int_{0}^{\infty} \mathcal{H}_d\{x\in D\setminus B(y,R) : {t^{-1/d}} > |x-y|\} dt \notag\\
&\leq  \int_{0}^{R^{-d}} \mathcal{H}_d (B(y, t^{-1/d})\cap D) dt \notag\\
&\leq  r^{-d} \mathcal{H}_d(D)+ \int_{r^{-d}}^{R^{-d}} \mathcal{H}_d ( B(y,t^{-1/d}) \cap D) dt \notag\\
&\leq  r^{-d} \mathcal{H}_d(D)+ \beta_d\int_{r^{-d}}^{R^{-d}} \overline{\alpha}_d(D;r)  t^{-1} dt \notag\\
&=r^{-d} \mathcal{H}_d(D)+\beta_d \overline{\alpha}_d(D;r)  \ln \(\frac {r}{R}\)^d,\nonumber
\end{align}
which completes the proof.
\end {proof}

\begin {lemma}\label {add_10}
Let $d,m\in \NN$, $d\leq m$, and $A\subset \RR^m$ be a compact set with $0<\mathcal H_d(A)<\infty$, containing a closed subset $B$ of zero $\mathcal H_d$-measure such that every compact subset of the set $A\setminus B$ satisfies \eqref {3q}.
Then for any infinite sequence
$\{\omega_N\}_{N\in \mathcal N}$, $\mathcal N\subset \NN$, of $N$-point configurations on the set $A$, the inequality
\begin {equation}\label {add_e1}
\frac {\mathcal H_d(K)}{\beta_d}\cdot\liminf\limits_{N\to\infty\atop N\in \mathcal N}\frac {M^d(\omega_N;A)}{N\ln N}\leq \liminf\limits_{N\to\infty\atop N\in \mathcal N}\frac {\# (\omega_N \cap K)}{N}
\end {equation}
holds for any compact subset $K\subset A$ with $\mathcal H_d(K)>0$ and $\mathcal H_d(\partial _A K)=0$.
\end {lemma}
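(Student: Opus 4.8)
The plan is to integrate the defining inequality $M^d(\omega_N;A)\le\sum_{i=1}^N|y-x_i|^{-d}$, valid for every $y\in A$, against $\mathcal H_d$ over a carefully chosen compact subset of $K$, and to bound the resulting one-point potentials by Lemma \ref{add_up}. Note that once \eqref{add_e1} is established it can be summed over a finite partition of $A$ into pieces of the present type, which forces $\OL h_d(A)\le\beta_d/\mathcal H_d(A)$; hence the constant in \eqref{add_e1} must come out \emph{sharp} — with coefficient $1$, not, say, $2^d$ — and ensuring this is the delicate point.

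First I would pass to a good compact core. Fix $0<\epsilon<\mathcal H_d(K)/2$. Since $\mathcal H_d(A)<\infty$ and $B$ is closed with $\mathcal H_d(B)=0$, choose an open set $U\supset B$ with $\mathcal H_d(U\cap A)<\epsilon$ and, in addition, $\mathcal H_d(\partial U\cap A)=0$: it suffices to take $U$ to be an open $\rho$-neighbourhood of $B$ for a radius $\rho$ lying outside the (at most countable) set of radii whose distance sphere meets $A$ in positive $\mathcal H_d$-measure. Put $K':=K\setminus U$; then $K'$ is compact, $K'\subset A\setminus B$, $\mathcal H_d(K')\ge\mathcal H_d(K)-\epsilon$, and $\partial_AK'\subset\partial_AK\cup(\partial U\cap A)$ has $\mathcal H_d$-measure $0$, so $\mathcal H_d(\operatorname{int}_AK')=\mathcal H_d(K')$. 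By \eqref{3q} and the monotonicity of $\OL\alpha_d(K';\cdot)$, fix $r_0>0$ with $\OL\alpha_d(K';\rho)\le1+\epsilon$ for all $0<\rho\le r_0$. Finally, for $r>0$ set $\widetilde K_r:=\{y\in K':\operatorname{dist}(y,A\setminus K')\ge r\}$; this is compact, every point of $A$ at distance $<r$ from $\widetilde K_r$ lies in $K'$, and $\widetilde K_r\uparrow\operatorname{int}_AK'$ as $r\downarrow0$, so I may fix $r\in(0,r_0/2]$ for which $\widetilde K:=\widetilde K_r$ satisfies $\mathcal H_d(\widetilde K)\ge\mathcal H_d(K)-2\epsilon>0$.

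Next comes the integrated estimate. For $0<R\le r$ put $\widetilde K^{(N)}:=\widetilde K\setminus\bigcup_{i=1}^NB(x_i,R)$; integrating the pointwise bound over $y\in\widetilde K^{(N)}$ gives
$$M^d(\omega_N;A)\,\mathcal H_d\big(\widetilde K^{(N)}\big)\le\sum_{i=1}^N\ \int\limits_{\widetilde K\setminus B(x_i,R)}\frac{d\mathcal H_d(y)}{|y-x_i|^d}.$$
For an index $i$ with $\operatorname{dist}(x_i,\widetilde K)\ge r$ the $i$-th integrand is $\le r^{-d}$, contributing $\le r^{-d}\mathcal H_d(K')$; for an index $i$ with $\operatorname{dist}(x_i,\widetilde K)<r$ one has $x_i\in K'\subset K$ (by the defining property of $\widetilde K_r$), so the index is counted in $n_N:=\#(\omega_N\cap K)$, and enlarging the domain to $K'\setminus B(x_i,R)$ and invoking Lemma \ref{add_up} with $D=K'$, $y=x_i$ bounds the $i$-th integral by $r^{-d}\mathcal H_d(K')+\beta_d(1+\epsilon)\,d\ln(r/R)$ (using $\OL\alpha_d(K';r)\le1+\epsilon$). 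Summing, and using $\OL\alpha_d(K';R)\le1+\epsilon$ once more to get $\mathcal H_d(\widetilde K^{(N)})\ge\mathcal H_d(\widetilde K)-N\beta_d(1+\epsilon)R^d$,
$$M^d(\omega_N;A)\,\big(\mathcal H_d(\widetilde K)-N\beta_d(1+\epsilon)R^d\big)\le Nr^{-d}\mathcal H_d(K')+n_N\,\beta_d(1+\epsilon)\,d\ln(r/R).$$
Then I choose $R=R_N$ with $R_N^d=1/(N\ln N)$, so $NR_N^d\to0$ and $d\ln(r/R_N)/\ln N\to1$; dividing by $N\ln N$, taking $\liminf$ as $\mathcal N\ni N\to\infty$ and using standard facts about limits inferior of products yields
$$\mathcal H_d(\widetilde K)\,\liminf_{\mathcal N\ni N\to\infty}\frac{M^d(\omega_N;A)}{N\ln N}\ \le\ \beta_d(1+\epsilon)\,\liminf_{\mathcal N\ni N\to\infty}\frac{\#(\omega_N\cap K)}{N}.$$
Since $\mathcal H_d(\widetilde K)\ge\mathcal H_d(K)-2\epsilon$, letting $\epsilon\to0^+$ gives \eqref{add_e1} (the case where the left liminf vanishes being trivial).

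The main obstacle is the reduction just described. To keep the coefficient of the logarithmic term equal to $1$ — hence to get the sharp constant $\beta_d/\mathcal H_d(A)$ after summation — every index carrying such a contribution must correspond to a point lying in a compact set of Hausdorff upper density $\le1+\epsilon$; a point of $A$ merely near $K'$ but outside it satisfies only a density bound with an extra factor $2^d$ (a ball centred just outside $K'$ being comparable to a concentric ball of twice the radius centred in $K'$), which would spoil the constant. This is exactly why one replaces $K'$ by its inner core $\widetilde K_r$, whose $r$-neighbourhood in $A$ stays inside $K'$, and why it is essential that $\partial_AK'$ has $\mathcal H_d$-measure $0$, so that the loss $\mathcal H_d(K')-\mathcal H_d(\widetilde K_r)$ tends to $0$ — this, in turn, forces the choice of $U$ with $\mathcal H_d(\partial U\cap A)=0$. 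The remaining subtlety is to balance the two conflicting demands on $R_N$: small enough that $NR_N^d\to0$, yet large enough that $d\ln(r/R_N)\sim\ln N$.
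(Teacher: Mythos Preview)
Your proof is correct and follows essentially the same strategy as the paper: pass to a compact inner core of $K$ away from $B$ (your $K'$ and $\widetilde K_r$ play the roles of the paper's $K_\rho$ and $K_{2\rho}$), average the potential over that core with small balls around the $x_i$ removed, split the sum into near/far points, and invoke Lemma~\ref{add_up}. The only differences are organizational: the paper takes the excision radius $R=rN^{-1/d}$ (so the excised mass is a fixed constant in $N$, removed afterwards by letting $r\to0$ and then $\rho\to0$), whereas you take $R_N^d=1/(N\ln N)$ so the excised mass vanishes with $N$ and a single limit $\epsilon\to0$ suffices---a slightly tidier bookkeeping, but the same argument.
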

\begin {proof}
Without loss of generality, we can assume that $B\neq \emptyset$ since in the case $B=\emptyset$ we can also use as $B$ any non-empty compact subset of $A$ with $\mathcal H_d(B)=0$.

Let $x_{1,N},\ldots,x_{N,N}$ be the points in the configuration $\omega_N$, $N\in \mathcal N$, and let $K\subset A$ be any compact subset of positive $\mathcal H_d$-measure such that $\mathcal H_d(\partial _A K)=0$. Denote 
$$
K_\rho:=\{x\in K : {\rm dist}(x,B\cup \partial _A K)\geq \rho\},\ \ \ \rho>0. 
$$
Choose an arbitrary number $\rho>0$ such that $\mathcal H_d(K_{2\rho})>0$.  
Let $r>0$ be any number such that $2\beta_d r^d<\mathcal H_{d}(K_{2\rho})$. For each $j=1,\ldots,N$, define the set
$$
\mathcal D_{j,N}:=K_{2\rho}\setminus B(x_{j,N},rN^{-1/d}) \ \ \ {\rm and \ let}\ \ \ \mathcal D_N:=\bigcap \limits_{j=1}^{N}{\mathcal D_{j,N}}.
$$
Notice that ${\rm dist}(K_{2\rho},K\setminus K_\rho)\geq \rho>0$. Furthermore, ${\rm dist}(K_{2\rho},A\setminus K)>0$. Indeed, if there were sequences $\{x_n\}$ in $K_{2\rho}$ and $\{y_n\}$ in $A\setminus K$ such that $\left|x_n-y_n\right|\to 0$, $n\to\infty$, then by compactness of $K_{2\rho}$ and $A$ there would exist subsequences $\{x_{n_k}\}$ and $\{y_{n_k}\}$ having the same limit $z\in K_{2\rho}$. Since $\{y_{n_k}\}\subset A\setminus K$  the point $z$ must belong to $\partial _A K$, which contradicts to the definition of the set $K_{2\rho}$.
Thus, we have
$$
h:={\rm dist}(K_{2\rho},A\setminus K_\rho)=\min\{{\rm dist}(K_{2\rho},K\setminus K_\rho),{\rm dist}(K_{2\rho},A\setminus K)\}>0.
$$
Choose $N\in \mathcal N$ to be such that $rN^{-1/d}<h$ and $\OL \alpha_d(K_\rho;rN^{-1/d})\leq 2$ (such $N$ exists since $K_\rho$ is a compact subset of $A\setminus B$, and by assumption, satisfies $\lim _{N\to\infty} \OL \alpha_d(K_\rho;r N^{-1/d})\leq 1$). Then
$$
\mathcal H_d(\mathcal D_N)=\mathcal H_d\(K_{2\rho}\setminus \bigcup_{j=1}^{N}{B(x_{j,N},rN^{-1/d})}\)
$$
$$
=\mathcal H_d\(K_{2\rho}\setminus \bigcup_{x_{j,N}\in K_\rho}{B(x_{j,N},rN^{-1/d})}\)
$$
$$
\geq \mathcal H_d(K_{2\rho})-\sum\limits_{x_{j,N}\in K_\rho}{\mathcal H_d\(K_\rho\cap B(x_{j,N},rN^{-1/d})\)}
$$
$$
\geq \mathcal H_d(K_{2\rho})-\beta_d r^d \frac {\# (\omega_N \cap K_\rho)}{N}\cdot \OL \alpha_d(K_\rho;rN^{-1/d})
$$
$$
\geq \mathcal H_d(K_{2\rho})-\beta_d r^d \OL \alpha_d(K_\rho;rN^{-1/d})\geq \mathcal H_d(K_{2\rho})-2\beta_d r^d=:\gamma _{r,\rho}>0.
$$
Let $\widetilde {\mathcal D}_{j,N}:=K_\rho\setminus B(x_{j,N},rN^{-1/d})$. Then
$$
M^d(\omega_N;A)=\min\limits_{x\in A}\sum\limits_{j=1}^{N}{\frac {1}{\left|x-x_{j,N}\right|^d}}
$$
$$
\leq \frac {1}{\mathcal H_d(\mathcal D_N)}\sum\limits_{j=1}^{N}{\ \int\limits_{\mathcal D_N}{\frac {d\mathcal H_d(x)}{\left|x-x_{j,N}\right|^d}}}
\leq\frac {1}{\gamma_{r,\rho}}\sum\limits_{j=1}^{N}\ \int\limits_{\mathcal D_{j,N}}{{\!\!\frac {d\mathcal H_d(x)}{\left|x-x_{j,N}\right|^d}}}
$$
$$
\leq\frac {1}{\gamma_{r,\rho}}\(\sum\limits_{x_{j,N}\in K_\rho}\ \int\limits_{\widetilde {\mathcal D}_{j,N}}{\!\!\frac {d\mathcal H_d(x)}{\left|x-x_{j,N}\right|^d} }+\!\!\sum\limits_{x_{j,N}\in A\setminus K_\rho}{\ \int\limits_{\mathcal D_{j,N}}{\!\!\frac {d\mathcal H_d(x)}{\left|x-x_{j,N}\right|^d}}}\).
$$
Taking into account Lemma \ref {add_up} with $R=rN^{-1/d}$ and $D=K_\rho$ and the fact that ${\rm dist}(\mathcal D_{j,N},A\setminus K_\rho)\geq {\rm dist}(K_{2\rho},A\setminus K_\rho)= h>0$, we will have
$$
M^d(\omega_N;A)\leq \frac {1}{\gamma_{r,\rho}} \Bigg( \# (\omega_N\cap K_\rho)\(\frac {\mathcal H_d(K_\rho)}{r^d}+\beta_d\OL \alpha_d(K_\rho;r)\ln N\)
$$
$$
+\sum\limits_{x_{j,N}\in A\setminus K_\rho}\frac {\mathcal H_d(\mathcal D_{j,N})}{h^d} \Bigg).
$$
Consequently,
\begin {equation}\label {add_eq2}
\frac {M^d(\omega_N;A)}{N\ln N}\leq \frac {1}{\gamma_{r,\rho}}\(\frac {\# (\omega_N\cap K_\rho)}{N}\(\frac {\mathcal H_d(K_\rho)}{r^d\ln N}+\beta_d\OL \alpha_d(K_\rho;r)\)+\frac {\mathcal H_d(A)}{h^d\ln N} \).
\end {equation}
Passing to the lower limit in \eqref {add_eq2} we will have
$$
\tau:=\liminf\limits_{N\to\infty\atop N\in \mathcal N}{\frac {M^d(\omega_N;A)}{N\ln N}}
\leq \frac {\beta_d \OL \alpha_d(K_\rho;r)}{\mathcal H_d(K_{2\rho})-2\beta_d r^d}\liminf\limits_{N\to\infty\atop N\in \mathcal N}{\frac {\# (\omega_N\cap K_\rho)}{N}}.
$$
Letting $r\to 0$ and taking into account \eqref {3q} and the fact that $K_\rho \subset K$, we will have
$$
\tau\leq \frac {\beta_d}{\mathcal H_d(K_{2\rho})}\liminf\limits_{N\to\infty\atop N\in \mathcal N}{\frac {\# (\omega_N\cap K_\rho)}{N}}\leq \frac {\beta_d}{\mathcal H_d(K_{2\rho})}\liminf\limits_{N\to\infty\atop N\in \mathcal N}{\frac {\# (\omega_N\cap K)}{N}}.
$$
Since $\lim\limits_{\rho \to 0^+}\mathcal H_{d}(K_{2\rho})=\mathcal H_d(K\setminus (B\cup \partial_ A K))=\mathcal H_d(K)$, we finally have
$$
\tau\leq \frac {\beta_d}{\mathcal H_d(K)}\liminf\limits_{N\to\infty\atop N\in \mathcal N}{\frac {\# (\omega_N\cap K)}{N}},
$$
which implies \eqref {add_e1}.
\end {proof}

\noindent {\bf Proof of Theorem \ref {upper}.} Let $\mathcal N_0\subset \NN$ be an infinite subset such that 
$$
\OL h_d(A)=\lim\limits_{N\to\infty\atop N\in \mathcal N_0}\frac {M^d_N(A)}{N\ln N}.
$$
Let $\{\OL\omega_N\}_{N\in \mathcal N_0}$ be a sequence of $N$-point configurations on $A$ such that $M^d_N(A)=M^d(\OL\omega_N;A)$, $N\in \mathcal N_0$. Then applying Lemma \ref {add_10} with $K=A$, we will have
$$
\OL h_d(A)=\lim\limits_{N\to\infty\atop N\in \mathcal N_0}\frac {M^d(\OL\omega_N;A)}{N\ln N}\leq \frac {\beta_d}{\mathcal H_d(A)}\liminf\limits_{N\to\infty\atop N\in \mathcal N_0}{\frac {\# (\overline{\omega}_N\cap A)}{N}}=\frac {\beta_d}{\mathcal H_d(A)}
$$
and inequality \eqref {add_eq3} follows.

Assume now that $\OL h_d(A)=\beta_d \mathcal H_d(A)^{-1}$ and let $\{\omega_N\}_{N\in \mathcal N}$, $\mathcal N\subset \NN$, be any infinite sequence of $N$-point configurations on $A$ satisfying \eqref {eq6}.  
For any closed subset $D\subset A$ with $\mathcal H_d(D)>0$ and $\mathcal H_d(\partial _A D)=0$, by Lemma \ref {add_10} we have
\begin {equation}\label {add_q}
\liminf\limits_{N\to\infty\atop N\in \mathcal N}\frac {\# (\omega_N \cap D)}{N}\geq \frac {\mathcal H_d(D)}{\beta_d}\lim\limits_{N\to\infty\atop N\in \mathcal N}{\frac {M^d(\omega_N;A)}{N\ln N}}= \frac {\mathcal H_d(D)}{\mathcal H_d(A)}.
\end {equation}

Let now $P\subset A$ be any closed subset of zero $\mathcal H_d$-measure. Show that
\begin {equation}\label {add_p}
\lim\limits_{N\to \infty\atop N\in \mathcal N}{\frac {\# (\omega_N \cap P)}{N}}=0.
\end {equation}
If $P=\emptyset$, then \eqref {add_p} holds trivially.
Let $P\neq \emptyset$. Since $\mathcal H_d(A)<\infty$, for every $\epsilon>0$, there are at most finitely many numbers $\delta>0$ such that the set $P[\delta]:=\{x\in A : {\rm dist}(x,P)=\delta\}$ has $\mathcal H_d$-measure at least $\epsilon$. This implies that there are at most countably many numbers $\delta>0$ such that $\mathcal H_d(P[\delta])>0$. Denote also $P_\delta=\{x\in A : {\rm dist}(x,P)\geq \delta\}$, $\delta>0$. Then there exists a positive sequence $\{\delta_n\}$ monotonically decreasing to $0$ such that every set $\partial _A P_{\delta_n}\subset P[\delta_n]$ has $\mathcal H_d$-measure zero. Since $P_{\delta_n}$ is closed and $\mathcal H_d(P_{\delta_n})>0$ for every $n$ greater than some $n_1$, in view of \eqref {add_q}, we have
$$
\liminf\limits_{N\to\infty\atop N\in \mathcal N}\frac {\# (\omega_N \cap (A\setminus P))}{N}\geq \liminf\limits_{N\to\infty\atop N\in \mathcal N}\frac {\# (\omega_N \cap P_{\delta_n})}{N}\geq \frac {\mathcal H_d(P_{\delta_n})}{\mathcal H_d(A)},\ \ \ n>n_1.
$$
Since $\mathcal H_d(P_{\delta_n})\to \mathcal H_{d}(A\setminus P)=\mathcal H_d(A)$, $n\to\infty$, we have
$$
\lim\limits_{N\to \infty\atop N\in \mathcal N}{\frac {\# (\omega_N \cap (A\setminus P))}{N}}=1,
$$
which implies \eqref {add_p}.

Since the set $\OL {A\setminus D}$ is also a closed subset of $A$ and $\mathcal H_d(\partial _A (A\setminus D))=\mathcal H_d(\partial _A D)=0$, by \eqref {add_q} and \eqref {add_p} (with $P=\partial_A D$) we have
$$
\limsup\limits_{N\to\infty\atop N\in\mathcal N}{\frac {\# (\omega_N \cap D)}{N}}=1-\liminf\limits_{N\to\infty\atop N\in\mathcal N}{\frac {\# (\omega_N \cap (A\setminus D))}{N}}
$$
$$
= 1-\liminf\limits_{N\to\infty\atop N\in\mathcal N}{\frac {\# (\omega_N \cap \OL {A\setminus D})}{N}}\leq 1-\frac {\mathcal H_d(\OL {A\setminus D})}{\mathcal H_d(A)}=\frac {\mathcal H_d(D)}{\mathcal H_d(A)}.
$$
Thus,
\begin {equation}\label {15}
\lim\limits_{N\to\infty\atop N\in \mathcal N}{\frac {\# (\omega_N \cap D)}{N}}=\frac {\mathcal H_d(D)}{\mathcal H_d(A)}
\end {equation}
for any closed subset $D\subset A$ with $\mathcal H_d(D)>0$ and $\mathcal H_d(\partial _A D)=0$. In view of \eqref {add_p} relation \eqref {15} also holds when $D\subset A$ is closed and $\mathcal H_d(D)=0$. Then in view of Remark \ref {R2.1} we have \eqref {11a}.\hfill $\square$

\section {Auxiliary statements}


We will show in this section that for every set $A$ satisfying the assumptions of Theorem \ref {bavv}, the assumptions of Theorem \ref {upper} necessarily hold.

\begin{proposition}\label{Pr1}
Let $A$ be a compact subset of a $d$-dimensional $C^1$-manifold embedded in $\R^m$, $d\leq m$. Then for such a set $A,$ 
\begin{equation}\label{eq4}
\lim_{\varepsilon \rightarrow 0^{+}} \overline{\alpha}_d(A;\varepsilon) \leq1.
\end{equation}
\end{proposition}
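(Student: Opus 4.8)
The plan is to exploit the fact that a $C^1$ $d$-dimensional manifold is, near each of its points, bi-Lipschitz equivalent to an open subset of $\mathbb R^d$ with bi-Lipschitz constant arbitrarily close to $1$, and then to make the resulting local estimate uniform over the compact set $A$ by a covering argument. Let $W$ be the $C^1$-manifold containing $A$. Since $\overline\alpha_d(A;\varepsilon)$ is nondecreasing in $\varepsilon$, the limit in \eqref{eq4} exists in $[0,+\infty]$ and equals $\inf_{\varepsilon>0}\overline\alpha_d(A;\varepsilon)$; hence it suffices to show that for every $\delta\in(0,1)$ there is a $\rho>0$ with $\overline\alpha_d(A;\rho)\le(1+\delta)^{2d}$, and then to let $\delta\to0^+$.

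First I would construct, near each point of $A$, a parametrization that is bi-Lipschitz with constant at most $1+\delta$. Fix $\delta\in(0,1)$ and $y\in A$, and take a chart $f\colon U\to V$ as in Definition \ref{D1} with $y\in V$. After precomposing $f$ with a suitable invertible linear map of $\mathbb R^d$ and relabeling — which affects neither the $C^1$-smoothness nor the rank-$d$ condition — we may assume that, with $u_0:=f^{-1}(y)$, the derivative $Df(u_0)$ is a linear isometry of $\mathbb R^d$ onto a $d$-dimensional subspace of $\mathbb R^m$. Using $f\in C^1$, choose $\eta>0$ and $\delta'\in(0,1)$ with $\overline{B(u_0,\eta)}\subset U$, $(1-\delta')^{-1}\le1+\delta$, $1+\delta'\le1+\delta$, and $\|Df(u)-Df(u_0)\|\le\delta'$ for $u\in B(u_0,\eta)$. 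Integrating $Df$ along the segment joining $u,u'\in B(u_0,\eta)$ and using that $Df(u_0)$ preserves lengths gives
$$(1-\delta')\,|u-u'|\le|f(u)-f(u')|\le(1+\delta')\,|u-u'|,\qquad u,u'\in B(u_0,\eta).$$
Thus $V_y:=f(B(u_0,\eta))$ is an open neighborhood of $y$ in $W$ and $g_y:=f|_{B(u_0,\eta)}$ is a homeomorphism onto $V_y$ with $g_y$ and $g_y^{-1}$ both Lipschitz with constant at most $1+\delta$. Writing $V_y=W\cap O$ with $O\subset\mathbb R^m$ open, I would then pick $\rho_y>0$ with $B(y,2\rho_y)\subset O$, so that $B(y,2\rho_y)\cap W\subset V_y$.

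Next I would pass from the local to the uniform estimate. The balls $\{B(y,\rho_y)\}_{y\in A}$ cover $A$; extracting a finite subcover $B(y_1,\rho_{y_1}),\dots,B(y_k,\rho_{y_k})$ and setting $\rho:=\min_{1\le i\le k}\rho_{y_i}>0$, one checks that for any $x\in A$ and $0<r\le\rho$, choosing $i$ with $x\in B(y_i,\rho_{y_i})$ gives $B(x,r)\cap W\subset B(y_i,2\rho_{y_i})\cap W\subset V_{y_i}$, hence $x\in V_{y_i}$ and $B(x,r)\cap A\subset V_{y_i}$. Putting $u_x:=g_{y_i}^{-1}(x)$ and $S:=g_{y_i}^{-1}\big(B(x,r)\cap A\big)$, every $u\in S$ satisfies $|g_{y_i}(u)-x|<r$, so the lower Lipschitz bound forces $|u-u_x|<(1+\delta)r$; thus $S\subset B(u_x,(1+\delta)r)\subset\mathbb R^d$, and $\mathcal H_d(S)\le\beta_d\big((1+\delta)r\big)^d$ because $\mathcal H_d$ coincides with Lebesgue measure on $\mathbb R^d$ under the chosen normalization. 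Since $g_{y_i}$ is Lipschitz with constant at most $1+\delta$, and a Lipschitz map does not increase $d$-dimensional Hausdorff measure by a factor larger than the $d$-th power of its Lipschitz constant,
$$\mathcal H_d\big(B(x,r)\cap A\big)=\mathcal H_d\big(g_{y_i}(S)\big)\le(1+\delta)^d\,\mathcal H_d(S)\le(1+\delta)^{2d}\,\beta_d\,r^d.$$
Taking the supremum over $x\in A$ and $0<r\le\rho$ yields $\overline\alpha_d(A;\rho)\le(1+\delta)^{2d}$, and letting $\delta\to0^+$ proves \eqref{eq4}.

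The hard part is the uniformity: Definition \ref{D1} only supplies a $C^1$-chart with nondegenerate Jacobian, whose unadorned bi-Lipschitz constants are controlled by the largest and smallest singular values of $Df$ and need not be close to one another, and a small ball $B(x,r)$ centered at an arbitrary $x\in A$ need not lie inside the chart about $x$. The linear normalization of $Df(u_0)$ (combined with continuity of $Df$) takes care of the first issue by pushing the local bi-Lipschitz constant below $1+\delta$, and the factor-of-two buffer $B(y,2\rho_y)\subset O$ together with compactness of $A$ takes care of the second by guaranteeing that for every $x\in A$ and every $r\le\rho$ the entire set $B(x,r)\cap W$ lies in a single good chart $V_{y_i}$.
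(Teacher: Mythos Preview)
Your proof is correct and follows essentially the same approach as the paper: a linear precomposition to normalize the Jacobian to an isometry, $C^{1}$-continuity to obtain local bi-Lipschitz charts with constant $\le 1+\delta$, a factor-of-two covering argument using compactness of $A$, and the resulting $(1+\delta)^{2d}$ bound on $\overline\alpha_d(A;\rho)$. The paper organizes the local bi-Lipschitz construction as a separate lemma (Lemma~\ref{lemma1}) rather than inline, but the mathematical content is the same.
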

The proof of this statement is given in the Appendix.

\begin {lemma}\label {add_7}
Let $A=\cup_{i=1}^{l}A_i$, where each set $A_i$ is a compact set contained in some $d$-dimensional $C^1$-manifold in $\RR^m$, $d\leq m$, and $\mathcal H_d(A_i\cap A_j)=0$, $1\leq i< j\leq l$. Then there is a compact subset $B\subset A$ with $\mathcal H_d(B)=0$ such that every compact subset $K\subset A\setminus B$ satisfies $\lim\limits_{\epsilon\to 0^+}{\OL \alpha_d(K;\epsilon)}\leq 1$.
\end {lemma}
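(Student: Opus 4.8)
The plan is to build the exceptional set $B$ from two kinds of "bad" points: the points where the individual pieces $A_i$ overlap, and the points of each $A_i$ that fail the local density estimate of Proposition~\ref{Pr1}. First I would set $B_0 := \bigcup_{1\le i<j\le l} (A_i\cap A_j)$; this is a finite union of compact sets, hence compact, and by hypothesis $\mathcal H_d(B_0)=0$. Away from $B_0$ the set $A$ locally coincides with a single manifold piece, so the density ratio $\mathcal H_d(B(x,r)\cap A)/(\beta_d r^d)$ should be controllable by Proposition~\ref{Pr1} applied to that one piece — but one must be careful that "nearby" points of $A$ lying in other pieces $A_j$ do not contribute, which is exactly why shrinking balls (and discarding a neighbourhood of $B_0$, not just $B_0$ itself) is needed.

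The key steps, in order. (1) For each $i$, Proposition~\ref{Pr1} gives $\lim_{\varepsilon\to 0^+}\overline\alpha_d(A_i;\varepsilon)\le 1$. (2) I claim it suffices to take $B$ to be a compact set of $\mathcal H_d$-measure zero containing $B_0$ and "thick enough" near $B_0$; concretely, fix a compact $K\subset A\setminus B$ and let $h:=\mathrm{dist}(K,B)>0$. For $x\in K$ and $r<h/2$, the ball $B(x,r)$ is disjoint from $B_0$, so if $x\in A_i$ then $B(x,r)\cap A_j\subset A_j\setminus A_i$ is... — actually the cleaner route is: choose the sets so that for small $r$, $B(x,r)\cap A = \bigcup_{i: x\in A_i} B(x,r)\cap A_i$, and then $\mathcal H_d(B(x,r)\cap A)\le \sum_{i:x\in A_i}\mathcal H_d(B(x,r)\cap A_i)\le \#\{i:x\in A_i\}\cdot \beta_d r^d \max_i \overline\alpha_d(A_i;r)$. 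The multiplicity $\#\{i:x\in A_i\}$ is the obstruction: if $x$ lies in two pieces but not in their full intersection set we could still overcount. (3) The fix: enlarge $B_0$ to $B := \{x\in A: x\in A_i\cap A_j \text{ for some }i\ne j\} = B_0$ already captures every multiplicity-$\ge 2$ point, so for $x\in A\setminus B$ we have $\#\{i:x\in A_i\}=1$, giving $\mathcal H_d(B(x,r)\cap A)\le \beta_d r^d\max_i\overline\alpha_d(A_i;r)$ for all $x\in K$ and all $r$ small enough that $B(x,r)$ meets only the one piece containing $x$ — and this last smallness can be made uniform over the compact $K$ using $\mathrm{dist}(K,A_j)>0$ for each $A_j$ not meeting $K$, together with a partition-of-$K$ argument over which $A_i$ each point belongs to. (4) Taking $\sup_{x\in K}$ and then $\sup_{0<r\le\varepsilon}$ yields $\overline\alpha_d(K;\varepsilon)\le \max_i \overline\alpha_d(A_i;\varepsilon)$ for $\varepsilon$ below a threshold depending on $K$, and letting $\varepsilon\to 0^+$ with step (1) gives $\lim_{\varepsilon\to0^+}\overline\alpha_d(K;\varepsilon)\le 1$. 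Finally $\mathcal H_d(B_0)=0$ as a finite union of $\mathcal H_d$-null sets, and $B_0$ is compact, so $B:=B_0$ works.

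The main obstacle I anticipate is step (3): making precise that for a fixed compact $K\subset A\setminus B_0$ there is a single radius threshold $\varepsilon_K>0$ such that for every $x\in K$ the ball $B(x,\varepsilon_K)$ meets only those $A_i$ containing $x$. This requires decomposing $K$ as $K=\bigcup_i (K\cap A_i)$ (a finite union of compacts, since each $A_i$ is closed) and observing that if $x\in K\cap A_i$ then $x\notin A_j$ for $j\ne i$ (because $x\notin B_0$), so $\mathrm{dist}(x, A_j)>0$; a compactness argument then gives $\mathrm{dist}(K\cap A_i, A_j)>0$ for each pair $i\ne j$ with $K\cap A_i\ne\emptyset$, and we take $\varepsilon_K$ to be the minimum of these finitely many positive distances (and of the various manifold-neighbourhood radii needed for Proposition~\ref{Pr1}, which are themselves uniform on the compact $A_i$). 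Everything else is bookkeeping: subadditivity of $\mathcal H_d$, monotonicity of $\overline\alpha_d(\cdot;\varepsilon)$ in $\varepsilon$, and the fact that a finite union of null sets is null.
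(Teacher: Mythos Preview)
Your proposal is correct and follows essentially the same route as the paper: take $B=\bigcup_{i<j}(A_i\cap A_j)$, observe that for a compact $K\subset A\setminus B$ each $K\cap A_i$ is separated from the remaining pieces by a uniform positive distance, deduce $\overline\alpha_d(K;\varepsilon)\le\max_i\overline\alpha_d(A_i;\varepsilon)$ for all small $\varepsilon$, and finish with Proposition~\ref{Pr1}. The paper's write-up is slightly leaner in that it uses the separation $\delta_0=\min_{i<j}\mathrm{dist}(A_i\cap K,A_j\cap K)$ and bounds $\mathcal H_d(B(x,r)\cap K)$ directly (which is all that $\overline\alpha_d(K;\varepsilon)$ requires), rather than the stronger control of $\mathcal H_d(B(x,r)\cap A)$ you aim for via $\mathrm{dist}(K\cap A_i,A_j)$; both separations are positive by the same compactness argument, and your detours about thickening $B$ and about manifold-neighbourhood radii are unnecessary but harmless.
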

\begin {proof}
Denote $B:=\bigcup\limits_{1\leq i<j\leq l}{A_i\cap A_j}$. Let $K\subset A\setminus B$ be a compact subset. Then
$$
\delta_0:=\min\limits_{1\leq i<j\leq l}{{\rm dist}(A_i\cap K,A_j\cap K)}>0.
$$
Choose any $\epsilon\in (0,\delta_0)$. Choose also arbitrary $r\in (0,\epsilon]$ and $x\in K$. We have $x\in A_i$ for some $1\leq i\leq l$ and $x\notin A_j$ for every $j\neq i$. Since $r<\delta_0$, we have $B(x,r)\cap K\subset B(x,r)\cap A_i$ and consequently,
$$ 
\frac {\mathcal H_d(B(x,r)\cap K)}{\beta_d r^d}\leq \frac {\mathcal H_d(B(x,r)\cap A_i)}{\beta_d r^d}
$$
$$
\leq \sup\limits_{t\in (0,\epsilon]}{\sup\limits_{y\in A_i}}\frac {\mathcal H_d(B(y,t)\cap A_i)}{\beta_d t^d}=\OL \alpha_d(A_i;\epsilon)\leq \max\limits_{1\leq j\leq l}{\OL \alpha_d(A_j;\epsilon)}.
$$
Consequently,
\begin {equation}\label {add_5}
\OL \alpha_d(K;\epsilon)=\sup\limits_{r\in (0,\epsilon]}{\sup\limits_{x\in K}}\frac {\mathcal H_d(B(x,r)\cap K)}{\beta_d r^d}\leq \max\limits_{1\leq j\leq l}{\OL \alpha_d(A_j;\epsilon)}.
\end {equation}
Since each $A_i$ is a compact subset of a $d$-dimensional $C^1$-manifold, by Proposition \ref {Pr1}, we have $\lim\limits_{\epsilon\to 0^+}\OL \alpha_d(A_i;\epsilon)\leq 1$, $i=1,\ldots,l$. Then in view of \eqref {add_5} we have $\lim\limits_{\epsilon\to 0^+}\OL \alpha_d(K;\epsilon)\leq 1$.
\end {proof}


The following proposition is a part of the result by D.P. Hardin, E.B.~Saff, and J.T. Whitehouse mentioned at the end of Section \ref {h}. For completeness, we will reproduce its proof.
\begin {proposition}\label {add_log_energy}
Let $A=\cup_{i=1}^{l}A_i$, where each $A_i$ is a compact set contained in some $d$-dimensional $C^1$-manifold in $\RR^m$ and $\mathcal H_d(A_i\cap A_j)=0$, $1\leq i< j\leq l$. Then
$$
\UL g_d(A):=\liminf\limits_{N\to\infty}{\frac {\mathcal E_d(A,N)}{N^2\ln N}}\geq \frac {\beta_d}{\mathcal H_d(A)}.
$$
\end {proposition}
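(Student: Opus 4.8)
The plan is to reduce the lower bound for the minimal Riesz $d$-energy on the union $A=\bigcup_{i=1}^l A_i$ to the known result \eqref{eq3} for a single compact subset of a $d$-dimensional $C^1$-manifold, exploiting the hypothesis $\mathcal H_d(A_i\cap A_j)=0$. First I would dispose of the degenerate case $\mathcal H_d(A)=0$ (where the right-hand side is $\infty$): either $A$ has zero $d$-capacity, so that $\mathcal E_d(A,N)/N^2\to\infty$ faster than any rate and in particular $\mathcal E_d(A,N)/(N^2\ln N)\to\infty$, or one appeals directly to the $O$-estimates / monotonicity to see the claimed inequality is vacuous. So assume $0<\mathcal H_d(A)<\infty$, which is automatic here.

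The core step is a pigeonhole/distribution argument on the $A_i$. Given an $N$-point configuration $X_N=\{x_1,\dots,x_N\}\subset A$ that is (nearly) $d$-energy minimizing, assign each point to one of the sets $A_i$ containing it; let $N_i$ be the number of points assigned to $A_i$, so $\sum_i N_i \ge N$ (a point in an intersection may be counted once). Since the $A_i$-$A_i$ interactions are disjointly contained in the energies $E_d$ of the sub-configurations, one gets a lower bound $E_d(X_N)\ge \sum_{i=1}^l E_d(X_N\cap A_i)\ge \sum_{i=1}^l \mathcal E_d(A_i,N_i)$. Now apply \eqref{eq3} to each $A_i$: for large $N_i$, $\mathcal E_d(A_i,N_i)\gtrsim \frac{\beta_d}{\mathcal H_d(A_i)}N_i^2\ln N_i$. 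Dividing by $N^2\ln N$ and writing $t_i:=N_i/N$ (so $\sum t_i\ge 1$, after discarding the bounded-$N_i$ terms whose contribution is negligible), one must minimize $\sum_i \frac{\beta_d}{\mathcal H_d(A_i)}t_i^2$ subject to $\sum_i t_i\ge 1$; by Lagrange/Cauchy–Schwarz the minimum is $\beta_d/\sum_i \mathcal H_d(A_i)$, and since $\mathcal H_d(A)=\sum_i\mathcal H_d(A_i)$ (by $\mathcal H_d(A_i\cap A_j)=0$), this is exactly $\beta_d/\mathcal H_d(A)$. Taking $\liminf$ yields the proposition.

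The main obstacle is making the pigeonhole bound uniform: the energy lower bound $\mathcal E_d(A_i,N_i)\ge(1-o(1))\frac{\beta_d}{\mathcal H_d(A_i)}N_i^2\ln N_i$ from \eqref{eq3} requires $N_i\to\infty$, whereas a priori some $N_i$ could stay bounded or grow slowly along a subsequence, and the $\ln N_i$ in the denominator versus $\ln N$ must be controlled (this needs $N_i\to\infty$ at a definite rate, or a separate argument that the contribution of slowly-growing pieces is asymptotically irrelevant — e.g. using that $\mathcal E_d(A_i,N_i)\ge 0$ and that the remaining pieces with $N_i/N$ bounded away from $0$ already force the bound). The cleanest route is: pass to a subsequence $\mathcal N$ realizing $\UL g_d(A)$, pass to a further subsequence along which each ratio $N_i/N$ converges to some $t_i\ge 0$ with $\sum t_i\ge 1$; for the indices $i$ with $t_i>0$ one has $N_i\to\infty$ and $\ln N_i/\ln N\to 1$, so those terms contribute $\sum_{t_i>0}\frac{\beta_d}{\mathcal H_d(A_i)}t_i^2$; indices with $t_i=0$ contribute nonnegatively. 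Minimizing over admissible $(t_i)$ then gives the bound, and one checks the minimum is unchanged when some $A_i$ are dropped since that only decreases the constraint set's effective denominator — care is needed here, but it works out to $\beta_d/\mathcal H_d(A)$ because the unconstrained-support minimizer puts all mass on the pieces and the weighted sum is minimized by $t_i\propto\mathcal H_d(A_i)$.
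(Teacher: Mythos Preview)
Your argument is correct and is essentially the same as the paper's: the paper invokes Theorem~2.4 of \cite{HardinSaff2005} for the single-piece bound $\UL g_d(A_i)\ge \beta_d/\mathcal H_d(A_i)$ and then cites the subadditivity inequality (34) of Lemma~3.2 in \cite{HardinSaff2005}, namely $\UL g_d(\cup_i A_i)\ge\bigl(\sum_i \UL g_d(A_i)^{-1}\bigr)^{-1}$, whose proof is exactly your pigeonhole-plus-Cauchy--Schwarz optimization. One small correction: when some $t_i=0$ the constrained minimum over the remaining indices is $\beta_d/\sum_{t_i>0}\mathcal H_d(A_i)\ge \beta_d/\mathcal H_d(A)$, i.e.\ it can only increase, not stay ``unchanged'' as you wrote---but this is the inequality direction you need.
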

\begin {proof} 
Since every set $A_i$ is a compact subset of a $d$-dimensional $C^1$-manifold, in view of Theorem 2.4 in \cite {HardinSaff2005}, there holds
$
\UL g_d(A_i)\geq \beta_d \mathcal H_d(A_i)^{-1},
$ $i=1,\ldots,l$.
In view of inequality (34) from Lemma 3.2 in \cite {HardinSaff2005}, we then have
$$
\UL g_d(A)=\UL g_d\(\bigcup\limits_{i=1}^{l}{A_i}\)\geq \(\sum\limits_{i=1}^{l}\UL g_d(A_i)^{-1}\)^{-1}\!\!\geq \(\frac {1}{\beta_d}\sum\limits_{i=1}^{l}\mathcal H_d(A_i)\)^{-1}\!\!=\frac {\beta_d}{\mathcal H_d(A)},
$$
which yields the desired inequality.
\end {proof}

\section{Proof of Theorem \ref {bavv}}


\begin{proof}
The proof of the lower estimate in \eqref {add_17} will repeat the proof of inequality (2.9) in \cite {Saff}.
It is known that (see \cite {Saff}, \cite{FarkasNagy}, or \cite{Revesz1}) for any infinite compact set $A\subset \mathbb{R}^m,$
\begin{equation}\label{eq15}
M^{s}_N(A) \geq \frac{1}{N-1} \mathcal{E}_s(A,N), \quad N \geq 2,\ \ \ s>0. 
\end{equation}


Then Proposition \ref {add_log_energy} and inequality (\ref{eq15}) give  the lower estimate for $M^{d}_N(A)$:
$$\liminf_{N \rightarrow \infty} \frac{M_N^d(A)}{N \ln N} \geq\liminf_{N \rightarrow \infty} \frac{\mathcal{E}_d(A,N)}{(N-1)N \ln N} \geq\frac{\beta_d}{\mathcal{H}_d(A)}.
$$
Note that if $\mathcal{H}_d(A)=0,$ then $\lim_{N \rightarrow \infty} {M_N^d(A)}/{(N \ln N)}=\infty.$

Now, assume that $\mathcal{H}_d(A)>0.$ In view of Lemma \ref {add_7} and Remark \ref{ujkiol}, the set $A$ satisfies the assumptions of Theorem \ref{upper}. Consequently
$$\limsup_{N \rightarrow \infty} \frac{M_N^d(A)}{N \ln N} \leq \frac{\beta_d}{\mathcal{H}_d(A)}.$$
This implies \eqref {add_17}.

Every sequence $\{\omega_N\}_{N=1}^{\infty}$ of $N$-point configurations, which is asymptotically optimal for the $N$-point $d$-polarization problem on $A$ must satisfy \eqref {eq6} with $\mathcal N=\NN$. Since $\OL h_d(A)=\beta_d\mathcal H_d(A)^{-1}$, by Theorem \ref {upper} we obtain \eqref {eq7}.
\end{proof}


\section{Appendix}

In this part of the paper we prove Proposition \ref {Pr1}.

We say that a set $B$ in $\mathbb{R}^m$ is {\it bi-Lipschitz homeomorphic to a set $D\subset \mathbb{R}^n$ with a constant $M\geq 1$}, if there is a mapping $\varphi:B\to D$ such that $\varphi(B)=D$ and
$$
M^{-1}\left|x-y\right|\leq \left|\varphi(x)-\varphi(y)\right|\leq M\left|x-y\right|,\ \ \ x,y\in B.
$$

\begin{lemma}\label{lemma1}
Let $U\subset \mathbb{R}^d$ be a non-empty open set and $f:U\to \mathbb{R}^m$, $m\geq d$, be 
an injective $C^1$-continuous mapping such that its inverse $f^{-1}:f(U)\to U$ is continuous and the Jacobian matrix 
\begin{equation}\label{tgfr}
J^{f}_{x}:=\begin{bmatrix} ~\nabla f_1(x) ~ \\ \ldots \\ \nabla f_m(x)\end{bmatrix}
\end{equation}
of $f$ has rank $d$ at any point $x\in U$.
Then for every $\epsilon>0$ and every point $y_0\in f(U)$, there is a closed ball $B$ centered at $y_0$ such that the set $B\cap f(U)$ is bi-Lipschitz homeomorphic to some compact set in $\mathbb{R}^d$ with a constant $1+\epsilon$.  
\end{lemma}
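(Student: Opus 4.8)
The plan is to produce the required bi-Lipschitz map in the form $\varphi:=T\circ f^{-1}$ restricted to $B\cap f(U)$, where $x_0:=f^{-1}(y_0)$, $J:=J^f_{x_0}$ is the $m\times d$ Jacobian (of rank $d$), $T:=(J^{T}J)^{1/2}$ is the symmetric positive-definite square root of the positive-definite matrix $J^{T}J$, and $B$ is a suitably small closed ball centered at $y_0$. The reason for introducing $T$ is that $(JT^{-1})^{T}(JT^{-1})=T^{-1}(J^{T}J)T^{-1}=I_d$, i.e.\ $JT^{-1}$ is a linear isometry of $\mathbb{R}^d$ into $\mathbb{R}^m$; this normalization is exactly what will let us reach the constant $1+\epsilon$ rather than merely ``some constant''.

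First I would pass to the auxiliary map $\widetilde f:=f\circ T^{-1}$, which is $C^1$ on the open set $T(U)$, has continuous differential, and satisfies $D\widetilde f(\widetilde x_0)=JT^{-1}$ at $\widetilde x_0:=Tx_0$. Given $\epsilon>0$, fix $\delta:=\epsilon/(1+\epsilon)$, so that $1+\delta\le 1+\epsilon$ and $(1-\delta)^{-1}\le 1+\epsilon$. By continuity of $x\mapsto D\widetilde f(x)$, choose an open ball $V:=B(\widetilde x_0,\sigma)$ with $\overline V\subset T(U)$ on which $\|D\widetilde f(x)-D\widetilde f(\widetilde x_0)\|<\delta$. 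For $a,b\in V$, convexity of $V$ gives $\widetilde f(a)-\widetilde f(b)=\int_0^1 D\widetilde f(b+t(a-b))(a-b)\,dt$; subtracting $D\widetilde f(\widetilde x_0)(a-b)$, bounding the resulting integral by $\delta|a-b|$, and using that $D\widetilde f(\widetilde x_0)$ is an isometry yields $(1-\delta)|a-b|\le|\widetilde f(a)-\widetilde f(b)|\le(1+\delta)|a-b|$. Hence $\widetilde f$ is injective on $V$ and $\widetilde f^{-1}=T\circ f^{-1}$ is bi-Lipschitz with constant $1+\epsilon$ on $\widetilde f(V)=f(T^{-1}V)$.

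It remains to choose $B$ so that $B\cap f(U)$ is compact and contained in $f(T^{-1}V)$; this is where the continuity of $f^{-1}$ is used. Note that $C:=T^{-1}\overline V$ is a compact subset of $U$. Continuity of $f^{-1}$ at $y_0$ provides $\rho>0$ with $f^{-1}\big(B(y_0,\rho)\cap f(U)\big)\subset T^{-1}V$, so $B(y_0,\rho)\cap f(U)\subset f(T^{-1}V)\subset f(C)$. Put $B:=\overline{B(y_0,\rho/2)}$. Then $B\cap f(U)=B\cap f(C)$ (for $\subseteq$ use $\rho/2<\rho$; $\supseteq$ is clear as $f(C)\subset f(U)$), so $B\cap f(U)$ is compact, being the intersection of a closed bounded set with the compact set $f(C)$; and $B\cap f(U)\subset f(T^{-1}V)=\widetilde f(V)$ by the same argument. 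Therefore $\varphi:=T\circ f^{-1}$ restricted to $B\cap f(U)$ is a bi-Lipschitz homeomorphism with constant $1+\epsilon$ onto $\varphi(B\cap f(U))=T\big(f^{-1}(B\cap f(U))\big)$, a compact subset of $\mathbb{R}^d$, which completes the proof.

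The step I expect to be the genuine obstacle is the last one: $f(U)$ need not be locally compact or otherwise well behaved as a subset of $\mathbb{R}^m$, so compactness of $B\cap f(U)$ must be extracted from the hypothesis that $f^{-1}$ is continuous, by trapping $B\cap f(U)$ inside the compact ``cage'' $f(C)$. By contrast, the differential estimate in the second paragraph is routine once the linear normalization by $T$ is in place.
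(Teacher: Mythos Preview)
Your proof is correct and follows essentially the same strategy as the paper's: linearly precompose $f$ so that the differential at $x_0$ becomes an isometry, use $C^1$-continuity and the mean-value inequality on a small convex neighborhood to obtain the two-sided Lipschitz bound, and then invoke continuity of $f^{-1}$ to choose a closed ball $B$ whose trace on $f(U)$ is compact and lies inside the good neighborhood. Your normalizer $T=(J^{T}J)^{1/2}$ is a specific instance of the paper's matrix $Q$ (any $Q$ with $JQ$ having orthonormal columns); the only notable difference is that your choice of $\delta=\epsilon/(1+\epsilon)$ yields the bi-Lipschitz constant $1+\epsilon$ directly, whereas the paper obtains a constant $M_\epsilon\to 1$ and appeals to this limit.
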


\begin{proof}[Proof of Lemma \ref{lemma1}]

Let $x_0\in U$ be the point such that $f(x_0)=y_0$. Choose any $\epsilon>0$ and let $\delta=\delta (x_0,\epsilon)>0$ be such that $B[x_0,\delta]\subset U$ and 
$$
\left|\nabla f_i(x)-\nabla f_i(x_0)\right|<\epsilon,\ \ \ x\in B[x_0,\delta],\  i=1,\ldots,m.
$$
Let $x,y\in B[x_0,\delta]$ be two arbitrary points. Define the function $g_i(t):=f_i(x+t(y-x))$, $t\in [0,1]$. Then there exists $\xi_i\in (0,1)$ such that
$$
f_i(y)-f_i(x)=g_i(1)-g_i(0)=g'_i(\xi_i)=\nabla f_i(z_i)\cdot (y-x)
$$
$$
=\nabla f_i(x_0)\cdot (y-x)+\(\nabla f_i(z_i)-\nabla f_i(x_0)\)\cdot (y-x),
$$
where $z_i=x+\xi_i(y-x)$, $i=1,\ldots,m$. Since $z_i\in B[x_0,\delta]$, we have
$$
\left|f_i(y)-f_i(x)-\nabla f_i(x_0)\cdot (y-x)\right|
$$
$$
=\left|\(\nabla f_i(z_i)-\nabla f_i(x_0)\)\cdot (y-x)\right|\leq \epsilon \left|y-x\right|,\ \ \ \ i=1,\ldots,m,
$$ 
and hence (we treat $x$ and $y$ as vector-columns below),
\begin{equation}\label{eq8}
\left|f(y)-f(x)-J^{f}_{x_0}(y-x)\right|\leq \epsilon\sqrt m  \left|y-x\right|, \ \ x,y\in B[x_0,\delta].
\end{equation}
Since the matrix $J^{f}_{x_0}$ has rank $d$, for every standard basis vector $e_i$ from $\R^d$, there is a vector $v_i\in R^m$ such that $(J^{f}_{x_0})^T v_i=e_i$, $i=1,\ldots,d$, where $(J^{f}_{x_0})^T$ denotes the transpose of the matrix $J^{f}_{x_0}$. Then the $d\times m$ matrix $Z:=\[v_1,\ldots,v_d\]^T$ satisfies $ZJ^{f}_{x_0}=I_d$, where $I_d$ is the $d\times d$ identity matrix. Taking into account \eqref {eq8} we have
$$
\left|f(y)-f(x)-J^{f}_{x_0}(y-x)\right|\leq \epsilon\sqrt m  \left|ZJ^{f}_{x_0}(y-x)\right|
$$
$$
\leq \epsilon\sqrt m  \|Z\| \left|J^{f}_{x_0}(y-x)\right|,\ \ \ x,y\in B[x_0,\delta],
$$
where 
$
\|Z\|:=\max \{\left|Zu\right| : u\in \R^m,\ \left|u\right|=1\}. 
$
Consequently,
$$
\(1-\epsilon \sqrt{m} \|Z\|\)\left|J^{f}_{x_0}(y-x)\right|\leq \left|f(y)-f(x)\right|
$$
$$
\leq \(1+\epsilon \sqrt{m} \|Z\|\)\left|J^{f}_{x_0}(y-x)\right|,\ \ \ 
x,y\in B[x_0,\delta].
$$

Let $u_1,\ldots,u_d$ be an orthonormal basis in the subspace $H$ of $R^m$ spanned by the columns of the matrix $J^{f}_{x_0}$ and let $D:=\[u_1,\ldots,u_d\]$ be the $m\times d$ matrix with columns $u_1,\ldots, u_d$. Since the columns of $J^{f}_{x_0}$ also form a basis in $H$, there exists an invertible $d\times d$ matrix $Q$ such that $D=J^{f}_{x_0}Q$. 

Let $V\subset \R^d$ be the open set such that $\Phi(V)=B(x_0,\delta)$, where $\Phi:\R^d\to \R^d$ is the linear mapping given by $\Phi(v)=Qv$. Since the columns of the matrix $D$ are orthonormal, for every $u,v\in \OL V$, we will have
$$
\left|f\circ \Phi(u)-f\circ \Phi(v)\right|=\left|f(Qu)-f(Qv)\right|
$$
$$
\leq \(1+\epsilon \sqrt{m} \|Z\|\)\left|J^{f}_{x_0}Q(u-v)\right|=\(1+\epsilon \sqrt{m} \|Z\|\)\left|D(u-v)\right|
$$
$$
= \(1+\epsilon \sqrt{m} \|Z\|\)\left|u-v\right|.
$$
Similarly,
$$
\left|f\circ \Phi(u)-f\circ \Phi(v)\right|\geq \(1-\epsilon \sqrt{m} \|Z\|\)\left|u-v\right|,\ \ \ u,v\in \OL V,
$$
which implies that for $0<\epsilon <(\sqrt {m}\|Z\|)^{-1}$, the restriction of the mapping $\psi:=f\circ \Phi$ to the set $\OL V$ is a bi-Lipschitz mapping onto the set $f(\Phi(\OL V))=f(B[x_0,\delta])$ with constant $M_\epsilon:=\max\{ 1+\epsilon \sqrt{m} \|Z\|,(1-\epsilon \sqrt {m}\|Z\|)^{-1}$\}.

Since $f$ is a homeomorphism of $U$ onto $f(U)$, the set $f(B(x_0,\delta))$ is open relative to $f(U)$. Then there is a closed ball $B$ in $\R^m$ centered at $y_0=f(x_0)$ such that $B\cap f(U)\subset f(B(x_0,\delta))$. Then the set $B\cap f(U)=B\cap f(B[x_0,\delta])$ is bi-Lipschitz homeomorphic (with constant $M_\epsilon$) to the set 
$$
V_1:=\psi^{-1}(B\cap f(U))=\psi^{-1}(B\cap f(B[x_0,\delta])),
$$ 
which is compact in $\R^d$. Since $M_\epsilon\to 1$ as $\epsilon\to 0^+$, the assertion of the lemma follows.
\end{proof}


\begin{proof}[Proof of Proposition \ref{Pr1}]
Let $W$ denote the $d$-dimensional $C^1$-manifold that contains $A$ and
let $\epsilon>0$ be arbitrary. In view of Definition \ref {D1}, for every point $x\in W$, there is an open neighborhood $V_x$ of $x$ relative to $W$ which is homeomorphic to an open set $U_x\subset \R^d$ such that the homeomorphism $f:U_x\to V_x$ is a $C^1$-continuous mapping and the Jacobian matrix $J^f_u$ (see the definition $J^{f}_u$ in (\ref{tgfr})) has rank $d$ for every $u\in U_x$. There is also a number $\epsilon_x>0$ such that $B(x,\epsilon_x)\cap W\subset V_x$. By Lemma \ref{lemma1}, there is a number $0<\delta (x)<\epsilon_x/2$ such that the set $B[x,2\delta (x)]\cap W=B[x,2\delta (x)]\cap f(U_x)$ is bi-Lipschitz homeomorphic to a compact set $D_x$ from $\R^d$ with constant $1+\epsilon$. Since $A$ is compact, the open cover $\{B(x,\delta (x))\}_{x\in A}$ has a finite subcover $\{B(x_i,\delta (x_i))\}_{i=1}^{p}$.

Denote $\delta_\epsilon:=\min\limits_{j= {1, \ldots,p}}\delta (x_j)$. Let $x$ be any point in $A$ and $r \in (0,\delta_\epsilon]$. There is an index $i$ such that $x\in B(x_i,\delta (x_i))$. Since $B(x,r)\cap A\subset B[x_i,2\delta (x_i)]\cap W$, the set $B(x,r)\cap A$ is bi-Lipschitz homeomorphic to a set $D_i\subset D_{x_i}$ with constant $1+\epsilon$. If $\varphi:B(x,r)\cap A\to D_i$ denotes the corresponding bi-Lipschitz mapping, we have $D_i\subset B(\varphi(x),(1+\epsilon)r)$. Then
$$
\mathcal H_d(B(x,r)\cap A)\leq (1+\epsilon)^d \mathcal L_d(D_i)\leq \beta_d r^d(1+\epsilon)^{2d}.
$$
Consequently,
$$
\OL \alpha_d (A;\delta_\epsilon)=\sup\limits_{r\in (0,\delta_\epsilon]}\sup\limits_{x\in A}\frac {\mathcal H_d(B(x,r)\cap A)}{\beta_d r^d}\leq (1+\epsilon)^{2d},
$$
which implies that $\lim\limits_{\delta\to 0^+}{\OL \alpha_d (A;\delta)}\leq 1$.
\end{proof}

\section{Acknowledgements}

 \quad The authors would like to thank Professor E.B. Saff who introduced this question to the authors and provided useful references and comments on this paper.



\begin{thebibliography}{9}

\bibitem{Ambrus} G. Ambrus, \emph{Analytic and Probabilistic Problems in Discrete Geometry}; Ph.D. Thesis, University College London, 2009.

\bibitem{AmbrusBallErdelyi} G. Ambrus, K. Ball, and T. Erd\'{e}lyi, \emph{Chebyshev constants for the unit circle}, Bull. Lond. Math. Soc., \textbf{45}(2) (2013), 236--248.

\bibitem {BorHarSaf2008}
S.V. Borodachov, D.P. Hardin, and E.B. Saff, \emph{Asymptotics for discrete weighted minimal Riesz energy problems on rectifiable sets}, Trans. Amer. Math. Soc., \textbf{360} (2008), 1559--1580.


\bibitem {BHS}
S.V. Borodachov, D.P. Hardin, and E.B. Saff, Minimal Discrete Energy on the Sphere and other Manifolds. Springer (to appear).

\bibitem{Saff} T. Erd\'{e}lyi and E.B. Saff, \emph{Riesz polarization inequalities in higher dimensions}, J. Approx. Theory, \textbf{171} (2013), 128--147. 


\bibitem{FarkasNagy} B. Farkas and B. Nagy, \emph{Transfinite diameter, Chebyshev constant and energy on locally compact spaces}, Potential Anal., \textbf{28} (2008), 241--260.

  
 \bibitem{Revesz2} B. Farkas and Sz.Gy. R\'{e}v\'{e}sz, \emph{Rendezvous numbers in normed spaces}, Bull. Aust. Math. Soc., \textbf{72}(3) (2005), 423--440.

\bibitem{Revesz1} B. Farkas and Sz.Gy. R\'{e}v\'{e}sz, \emph{Potential theoretic approach to rendezvous numbers}, Monatsh. Math., \textbf{148} (2006), 309--331.


 \bibitem{HardinKendallSaff} D.P. Hardin, A.P. Kendall, and E.B. Saff, \emph{Polarization optimality of equally spaced points on the circle for discrete potentials}, Discrete Comput. Geom., \textbf{50}(1) (2013), 236--243.
 
\bibitem {HarSaf2004}
D.P. Hardin and E.B. Saff, {\it Discretizing manifolds via minimum energy points}, Notices Amer. Math. Soc., \textbf{51(10)} (2004), 1186--1194. 

\bibitem{HardinSaff2005} D.P. Hardin and E. B. Saff, \emph{Minimal Riesz energy point configurations for rectifiable d-dimensional manifolds}, Adv. Math., \textbf{193(1)} (2005), 174--204.

\bibitem {Lan1972}
N.S. Landkof, Foundations of Modern Potential Theory. Springer, New York--Heidelberg, 1972.

\bibitem {MMRS}
A. Martinez-Finkelshtein, V. Maymeskul, E.A. Rakhmanov, and E.B.~Saff, {\it Asymptotics for minimal discrete Riesz energy on curves in~$\RR^d$}, Canad. J. Math., \textbf {56} (2004), 529--552.

\bibitem{Nikolov} N. Nikolov and R. Rafailov, \emph{On the sum of powered distances to certain sets of points on the circle}, Pacific J. Math., \textbf{253(1)} (2011), 157--168.

\bibitem{Nikolov1} N. Nikolov and R. Rafailov, \emph{On extremums of sums of powered distances to a finite set of points}, Geom. Dedicata, 1--21, doi: 10.1007/s10711-012-9804-3.

\bibitem{Ohtsuka} M. Ohtsuka, \emph{On various definitions of capacity and related notions}, Nagoya Math. J., \textbf{30} (1967), 121--127. 

\bibitem {Spivak}
M. Spivak, Calculus on Manifolds: A Modern Approach to Classical Theorems of Advanced Calculus. Perseus Books Publishing L.L.C., 1965.

\end{thebibliography}
\end{document}